\definecolor{webgreen}{rgb}{0,.5,0}
\definecolor{webbrown}{rgb}{.6,0,0}
\newcommand{\seqnum}[1]{\href{http://oeis.org/#1}{\underline{#1}}}
\begin{document}

\theoremstyle{plain}
\newtheorem{theorem}{Theorem}
\newtheorem{corollary}[theorem]{Corollary}
\newtheorem{lemma}[theorem]{Lemma}
\newtheorem{proposition}[theorem]{Proposition}
\newtheorem{obs}[theorem]{Observation}

\theoremstyle{definition}
\newtheorem{definition}[theorem]{Definition}
\newtheorem{example}[theorem]{Example}
\newtheorem{conjecture}[theorem]{Conjecture}

\theoremstyle{remark}
\newtheorem{remark}[theorem]{Remark}

\begin{center}
\vskip 1cm
{\LARGE\bf Reinterpreting the Middle-Levels Theorem via Natural Enumeration of Ordered Trees}

\vskip 1cm
\large
Italo J. Dejter\\
University of Puerto Rico\\
Rio Piedras, PR 00936-8377\\
\href{mailto:italo.dejter@gmail.com}{\tt italo.dejter@gmail.com} \\
\end{center}
 
\begin{abstract}\noindent 
Let $0<k\in\mathbb{Z}$. A reinterpretation of the proof of existence of Hamilton cycles in the middle-levels graph $M_k$ induced by the vertices of the $(2k+1)$-cube representing the $k$- and $(k+1)$-subsets of $\{0,\ldots,2k\}$ is given via an associated dihedral quotient graph of $M_k$ whose vertices represent the ordered (rooted) trees of order $k+1$ and size $k$.
\end{abstract}

\section{Introduction}\label{s1}

Let $0<k\in\mathbb{Z}$. 
The {\it middle-levels graph} $M_k$  \cite{KT} is the subgraph induced by the $k$-th and $(k+1)$-th levels (formed by the $k$- and $(k+1)$-subsets of $[2k+1]=\{0,\ldots,2k\}$) in the Hasse diagram \cite{Savage} of the Boolean lattice $2^{[2k+1]}$.
The dihedral group $D_{4k+2}$ acts on $M_k$ via translations mod $2k+1$ (Section~\ref{s4}) and complemented reversals (Section~\ref{s6}). 
The sequence $\mathcal S$ \cite{oeis} \seqnum{A239903} of {\it restricted-growth strings} or {\it RGS's} 
(\cite{Arndt} page 325, \cite{Stanley} page 224 item (u)) will be shown to unify the presentation of all $M_k$'s. 
In fact, the first $C_k$ terms of $\mathcal S$ will stand for the orbits of $V(M_k)$ under natural $D_{4k+2}$-action, where $C_k=\frac{(2k)!}{k!(k+1)!}$ is the $k$-th Catalan number \cite{oeis} \seqnum{A000108}. This will provide a reinterpretation (Section~\ref{ultimo}) of the Middle-Levels Theorem on the existence of Hamilton cycles in $M_k$ \cite{M,gmn} via $k${\it -germs} (Section~\ref{srgg}) of RGS's. For the history of this theorem, \cite{M} may be consulted, but the conjecture answered by  it was learned from B. Bollob\'as on January 23, 1983, who mentioned it as a P. Erd\"os conjecture.

 In Section~\ref{s7}, the cited $D_{4k+2}$-action on $M_k=(V(M_k),E(M_k))$ allows to project $M_k$ onto a quotient graph $R_k$ whose vertices stand for the first $C_k$ terms of $\mathcal S$ via the lexical-matching colors \cite{KT} (or {\it lexical colors}) $0,1,\ldots,k$ on the $k+1$ edges incident to each vertex (Section~\ref{s10}).
In preparation, RGS's $\alpha$ are converted in Section~\ref{s3} into $(2k+1)$-strings $F(\alpha)$, composed by the $k+1$ lexical colors and $k$ asterisks, representing all ordered (rooted) $k$-edge trees
(Remark~\ref{re}) via  a ``Castling'' procedure that facilitates enumeration. These trees (encoded as $F(\alpha)$) are shown to represent the vertices of $R_k$ via ``Uncastling'' procedure (Section~\ref{s9}).

In Section~\ref{2fact}, the 2-factor $W_{01}^k$ of $R_k$ determined by the colors 0 and 1 is analyzed from the RGS-dihedral action viewpoint. From this, $W_{01}^k$ is seen in Section~\ref{ultimo} to morph into Hamilton cycles of $M_k$ via symmetric differences with 6-cycles whose presentation is alternate to that of \cite{gmn}.
In particular, an integer sequence ${\mathcal S}_0$ is shown to exist such that, for each integer $k>0$,
the neighbors via color $k$ of the RGS's in $R_k$ ordered as in $\mathcal S$ correspond to an idempotent permutation on the first $C_k$ terms of ${\mathcal S}_0$. This and related properties hold for colors $0,1,\ldots,k$ (Theorem~\ref{nt}
 and Remarks~\ref{pro1}-\ref{p11}) in part reflecting and extending from Observation~\ref{lem} in Section~\ref{2fact} properties of 
plane trees (i.e., classes of ordered trees under root rotation). 
Moreover, Section~\ref{rootk} considers suggestive symmetry properties by reversing the designation of the roots in the ordered trees so that each lexical-color $i$ adjacency ($0\le i\le k$) can be seen from the lexical-color $(k-i)$ viewpoint.   

\section{Restricted-Growth Strings and $k$-Germs}\label{srgg}

Let $0<k\in\mathbb{Z}$. The sequence of (pairwise different) RGS's ${\mathcal S}=(\beta(0),\ldots,\beta(17),\ldots)=$
$$(0,1,10,11,12,100,101,110,111,112,120,121,122,123,1000,1001,1010,1011,\ldots)$$
has the lengths of its contiguous pairs $(\beta(i-1),\beta(i))$ constant unless $i=C_k$ for $1<k\in\mathbb{Z}$, in which case 
 $\beta(i-1)=\beta(C_k-1)=12\cdots k$ and $\beta(i)=\beta(C_k)=10^k=10\cdots 0$.

To view the continuation of the sequence $\mathcal S$, each RGS $\beta=\beta(m)$ will be transformed, for every $k\in\mathbb{Z}$ such that $k\ge$ length$(\beta)$, into a $(k-1)$-string $\alpha=a_{k-1}a_{k-2}\cdots a_2a_1$
 by prefixing $k-$ length$(\beta)$ zeros to $\beta$. We say such $\alpha$ is a $k$-{\it germ}. More generally, 
 a $k$-{\it germ} $\alpha$ ($1<k\in\mathbb{Z}$) is formally defined to be a $(k-1)$-string $\alpha=a_{k-1}a_{k-2}\cdots a_2a_1$ such that:
\begin{enumerate}\item[\bf(1)] the leftmost position (called position $k-1$) of $\alpha$ contains the entry $a_{k-1}\in\{0,1\}$;
\item[\bf(2)] given $1<i<k$,
the entry $a_{i-1}$ (at position $i-1$) satisfies $0\le a_{i-1}\le a_i+1$.
\end{enumerate}
Every $k$-germ $a_{k-1}a_{k-2}\cdots a_2a_1$ yields the $(k+1)$-germ $0a_{k-1}a_{k-2}\cdots$ $a_2a_1$.
A {\it non-null RGS} is obtained by stripping a $k$-germ $\alpha=a_{k-1}a_{k-2}\cdots a_1$ $\ne 00\cdots 0$ off all the null entries to the left of its leftmost position containing a 1. Such a non-null RGS is again denoted $\alpha$. We also say that the {\it null RGS} $\alpha=0$ corresponds to every null $k$-germ $\alpha$, for $0<k\in\mathbb{Z}$. (We use the same notations $\alpha=\alpha(m)$ and $\beta=\beta(m)$ to denote both a $k$-germ and its associated RGS).

The $k$-germs are ordered as follows. Given 2 $k$-germs, say
$\alpha=a_{k-1}\cdots a_2a_1$ and $\beta=b_{k-1}\cdots b_2b_1,$ where $\alpha\ne \beta$, we say that $\alpha$ precedes $\beta$, written $\alpha<\beta$, whenever either:
\begin{enumerate}
\item[\bf (i)] $a_{k-1} < b_{k-1}$ or
\item[\bf (ii)] $\exists i\in(0,k)\cap\mathbb{Z}$ such that $a_i < b_i$, with $a_j=b_j$, $\forall j\in(i,k)\cap\mathbb{Z}$.
\end{enumerate}
The resulting order on $k$-germs $\alpha(m)$, ($m\le C_k$), corresponding biunivocally (via the assignment $m\rightarrow\alpha(m)$) with the natural order on $m$, yields a listing that we call the {\it natural ($k$-germ)  enumeration}.
Note that there are exactly $C_k$ $k$-germs $\alpha=\alpha(m)<10^k$, $\forall k>0$. 

\section{Castling of Ordered Trees}\label{s3}

\begin{theorem}\label{thm1} To each $k$-germ $\alpha=a_{k-1}\cdots a_1\ne 0^{k-1}$ with rightmost entry $a_i\ne 0$ ($k>i\ge 1$)
corresponds a $k$-germ $\beta(\alpha)=b_{k-1}\cdots b_1\!<\alpha$ with $b_i= a_i-1$ and $a_j=b_j$, ($j\ne i$).
All $k$-germs form an ordered tree ${\mathcal T}_k$ rooted at $0^{k-1}$, each $k$-germ $\alpha\ne0^{k-1}$ with $\beta(\alpha)$ as parent.
\end{theorem}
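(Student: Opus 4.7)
The plan is to verify, in order, (a) that $\beta(\alpha)$ is a well-defined $k$-germ, (b) that $\beta(\alpha)<\alpha$ in the natural order, (c) that iterating $\beta$ roots every $k$-germ at $0^{k-1}$, so the resulting parent function yields a tree, and (d) that a canonical sibling order makes this tree ordered.

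For (a), I fix $\alpha=a_{k-1}\cdots a_1$ with rightmost nonzero entry $a_i$ (so $a_{i-1}=\cdots =a_1=0$), and set $b_j=a_j$ for $j\ne i$, $b_i=a_i-1\ge 0$. I would check the two germ axioms position by position for $\beta(\alpha)$. Condition \textbf{(1)} is immediate unless $i=k-1$, in which case $a_{k-1}\in\{0,1\}$ combined with $a_{k-1}\ge 1$ forces $a_{k-1}=1$, hence $b_{k-1}=0\in\{0,1\}$. For condition \textbf{(2)}, the only positions whose constraint could be affected by decreasing $a_i$ are $i$ itself and $i-1$: at $i$, $b_i=a_i-1<a_i\le a_{i+1}+1=b_{i+1}+1$; at $i-1$ (if $i>1$), $b_{i-1}=a_{i-1}=0\le a_i=(a_i-1)+1=b_i+1$, where the crucial use of ``$a_i$ rightmost nonzero'' appears. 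All other constraints are inherited unchanged from $\alpha$.

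For (b), since $\beta(\alpha)$ agrees with $\alpha$ in positions $k-1,\ldots,i+1$ and $b_i<a_i$, clause \textbf{(ii)} of the ordering gives $\beta(\alpha)<\alpha$ directly. For (c), I would observe that $\beta$ is defined on every germ except $0^{k-1}$ (since any nonzero germ has a rightmost nonzero entry), and that repeated application of $\beta$ produces a strictly decreasing sequence in a finite totally ordered set, hence terminates; the only possible terminus is $0^{k-1}$. Regarding the parent pointer $\alpha\mapsto\beta(\alpha)$ on the $C_k$-element set of $k$-germs, connectivity via these $C_k-1$ edges to the common sink $0^{k-1}$, together with the edge count, forces the parent graph to be a tree rooted at $0^{k-1}$.

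For (d), the children of a germ $\gamma=c_{k-1}\cdots c_1$ are precisely the germs obtained by picking a position $i$ with $c_{i-1}=\cdots=c_1=0$ and $c_i+1\le c_{i+1}+1$ (taking $c_k:=0$ when $i=k-1$, restricted to the $\{0,1\}$ case), then setting $a_i=c_i+1$. I would list these children in increasing order of the position $i$ bumped up — equivalently, in the order induced by the natural germ order — giving the left-to-right ordering of subtrees at $\gamma$ and producing the ordered rooted tree $\mathcal T_k$. The main friction point I anticipate is keeping the indexing in (a) honest around $i=1$ and $i=k-1$, since the boundary constraints (positions $0$ and $k-1$) behave differently from the generic case; once those two corner cases are cleanly dispatched, the remainder is bookkeeping.
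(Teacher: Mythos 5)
Your proof is correct; the paper itself offers no argument beyond declaring the statement ``straightforward'' and illustrating it with Table I, so your explicit verification of the two germ axioms for $\beta(\alpha)$, the strict descent of iterated $\beta$ to $0^{k-1}$, and the sibling ordering by bumped position is precisely the routine check the paper leaves implicit. One cosmetic slip: when $i=k-1$ the comparison $\beta(\alpha)<\alpha$ falls under clause \textbf{(i)} of the order definition rather than clause \textbf{(ii)} (whose quantifier requires $k-1>i$), but this does not affect the argument.
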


\begin{proof} The statement, illustrated for $k=2,3,4$ by means of the first 3 columns of Table I, is straightforward. Table I also serves as illustration to the proof of Theorem~\ref{thm2}, below.
\end{proof}

By representing ${\mathcal T}_k$ with each node $\beta$ having its children $\alpha$ enclosed between parentheses following $\beta$ and separating siblings with commas, we can write: $${\mathcal T}_4=000(001,010(011(012)),100(101,110(111(121)),120(121(122(123))))).$$

\begin{center}TABLE I
$$\begin{array}{||r||c|c|c|c|c|c|c|c||}
m&\;\;\alpha &\;\;\beta & F(\beta) & i & W^i\,|\,X\,|\,Y\,|\,Z^i & W^i\,|\,Y\,|\,X\,|\,Z^i & F(\alpha) & \alpha\\\hline\hline
0&\;\;0 &\;\; -  &    -            & - &    -        &   -         & 012\!*\!* & \;\;0 \\\hline
1&\;\;1 &\;\;0  & 012\!*\!*       & 1 & 0\,|\,1\,|\,2\!*|*     & 0\,|\,2\!*|\,1\,|* & 02\!*\!1* & \;\;1 \\\hline\hline
0&00 & -   &    -            & - &    -        &   -                           & 012\,3**\,* & 00 \\\hline
1&01 & 00 & 0123***     & 1 & 0|1|23**|* & 0|23**|1|* & 023**\,1\,* & 01 \\\hline
2&10 & 00 & 0123***     & 2 & 01|2|\,3*|** & 01|3*|2|** & 013*2** & 10 \\\hline
3&11 & 10 & 013*2** & 1 & 0|13*|2*|* & 0|2*|13*|* & 02*13** & 11 \\\hline
4&12 & 11 & 02*13** & 1 & 0|2*1|3*|* & 0|3*|2*3|* & 03*2*1* & 12 \\\hline\hline
0&000 & -   & -         & - &            - &            - & 01234**** & 000\\\hline
1&001 & 000 & 01234**** & 1 & 0|1|234***|\,* & 0|234***|1|* & 0234***1\,* & 001\\\hline
2&010 & 000 & 01234**** & 2 & 01|2|34**|*\,* & 01|34**|2|*\,* & 0134**\,2** & 010\\\hline
3&011 & 010 & 0134**2** & 1 & 0|134**|2*|\,* & 0|2*|134**|\,* & 02*134**\,* & 011\\\hline
4&012 & 011 & 02*134*** & 1 & 0|2*1|34**|\,* & 0|34**|2*1|\,* & 034**2*1\,* & 012\\\hline
5&100 & 000 & 01234**** & 3 & 012|3|4*|**\,* & 012|4*|3|**\,* & 0124*3**\,* & 100\\\hline
6&101 & 100 & 0124*3*** & 1 & 0|1|24*3**|\,* & 0|24*3**|1\,|\,* & 024*3**\,1\,* & 101\\\hline
7&110 & 100 & 0124*3*** & 2 & 01|24*|3*|**   & 01|3*|24*|** & 013*24**\,* & 110\\\hline
8&111 & 110 & 013*24*** & 1 & 0|13*|24**|*   & 0|24**|\,13*|\,* & 024**\,13** & 111\\\hline
9&112 & 111 & 024**13** & 1 & 0|24**1|3*|*   & 0|3*|24**\,1|\,* & 03*24**\,1\,* & 112\\\hline
10&120 & 110 & 013*24*** & 2 & 01|3*2|4*|** & 01|4*|3*2|** & 014*3*2** & 120\\\hline
11&121 & 120 & 014*3*2** &  1 & 0|14*3*|2*|* & 0|2*|14*3*|* & 02*14*3** & 121\\\hline
12&122 & 121 & 02*14*3** & 1 & 0|2*34*|3*|* & 0|3*|2*14*|* & 03*2*14** & 122\\\hline
13&123 & 122 & 03*2*14** & 1 & 0|3*2*1|4*|* & 0|4*|3*2*1|* & 04*3*2*1* & 123\\\hline\hline
\end{array}$$\end{center}

\begin{theorem}\label{thm2}
To each $k$-germ $\alpha=a_{k-1}\cdots a_1$ corresponds a $(2k+1)$-string $F(\alpha)=f_0f_1\cdots f_{2k}$ whose entries are both the numbers $0,1,\ldots,k$ (once each) and $k$ asterisks ($*$) and such that:

\vspace*{1mm}

{\bf(A)}
$F(0^{k-1})=012\cdots(k-2)(k-1)k*\cdots *$;

\vspace*{1mm}

{\bf(B)}
if $\alpha\ne 0^{k-1}$, then $F(\alpha)$ is obtained from $F(\beta)=F(\beta(\alpha))=h_0h_1\cdots h_{2k}$ by means of

\hspace*{8mm}the following ``Castling Procedure'' steps:

\vspace*{1mm}

\noindent{\bf 1.}
let $W^i=h_0h_1\cdots h_{i-1}=f_0f_1\cdots f_{i-1}$ and $Z^i=h_{2k-i+1}\cdots h_{2k-1}h_{2k}=f_{2k-i+1}\cdots f_{2k-1}f_{2k}$

be respectively the initial and terminal substrings of length $i$ in $F(\beta)$;

\vspace*{1mm}

\noindent{\bf 2.} let $\Omega>0$ be the leftmost entry of the substring $U=F(\beta)\setminus(W^i\cup Z^i)$ and consider the

concatenation $U=X|Y$, with $Y$
starting at entry $\Omega+1$; then, $F(\beta)=W^i|X|Y|Z^i$;

\vspace*{1mm}

\noindent{\bf 3.} set $F(\alpha)=W^i|Y|X|Z^i$.

\vspace*{1mm}

\noindent In particular:
\begin{enumerate}
\item[\bf(a)] the leftmost entry of each $F(\alpha)$ is $0$; $k*$ is a substring of $F(\alpha)$, but $*k$ is not;
\item[\bf(b)] a number to the immediate right of any $b\in[0,k)$ in $F(\alpha)$ is larger than $b$;
\item[\bf(c)] $W^i$ is an (ascending) number $i$-substring and $Z^i$ is formed by $i$ of the $k$ asterisks.
\end{enumerate}\end{theorem}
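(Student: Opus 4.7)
The plan is to prove Theorem~\ref{thm2} by strong induction on the natural enumeration of $k$-germs, carrying a hypothesis strictly stronger than (a), (b), (c). Specifically, in addition to (a) and (b) I would assume that for each $k$-germ $\beta$ with rightmost nonzero position $j_\beta$ (and $j_\beta := k$ when $\beta = 0^{k-1}$), the first $j_\beta$ entries of $F(\beta)$ are exactly $0,1,\ldots,j_\beta-1$ in order and the last $j_\beta$ entries are asterisks. This tail-asterisk strengthening is precisely what makes the forms of $W^i$ and $Z^i$ required by (c) automatically available at the next castling step.

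The base case $F(0^{k-1}) = 012\cdots k*\cdots*$ is immediate. For the induction step, let $\alpha \ne 0^{k-1}$ have rightmost nonzero entry at position $i$, and let $\beta = \beta(\alpha)$ be as in Theorem~\ref{thm1}. Then $j_\beta \ge i$, so the strengthened hypothesis delivers $W^i = 0\,1\cdots(i-1)$ and $Z^i = *\!\cdots *$ directly, establishing (c). The middle block $U$ therefore contains exactly the numbers $i, i+1, \ldots, k$ together with $k-i$ asterisks, with leftmost entry a positive number (by (b) applied after the number $i-1$ at the tail of $W^i$); hence $\Omega$ is well defined, $\Omega < k$ (because $U$ contains a number smaller than $k$), and $\Omega+1$ also lies in $U$, so the Castling step is well defined and produces a string of the required form.

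To verify (a) and (b) for $F(\alpha) = W^i|Y|X|Z^i$ it suffices to examine the three new adjacencies. At the $W^i$--$Y$ boundary the pair is $(i-1,\Omega+1)$ with $\Omega+1 > i-1$, so (b) holds; at the $X$--$Z^i$ boundary the right-neighbor is an asterisk, so neither (a) nor (b) is challenged there; the critical pair sits at the $Y$--$X$ boundary, where the last character of $Y$ gets juxtaposed with $\Omega$. To preserve (a) one must rule out $k$ as the last character of $Y$, equivalently $f_{2k-i}\ne k$ in $F(\beta)$: for $i < j_\beta$ this is immediate from the tail-asterisk invariant. Once these adjacencies are checked, the strengthened invariant for $F(\alpha)$ itself (with $j_\alpha = i$) is forced by the block structure of $F(\alpha)=W^i|Y|X|Z^i$: the first $i$ entries are $W^i = 0,1,\ldots,i-1$ and the last $i$ entries are $Z^i$.

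The main obstacle I expect is the boundary case $i = j_\beta$, where the tail-asterisk invariant alone does not reach position $2k-j_\beta$. Here I would exploit the $k$-germ validity constraint $b_{j_\beta} \le b_{j_\beta+1}$ that is forced by the existence of the child $\alpha$, and thread through the induction an auxiliary clause controlling the position of the entry $k$ in $F(\beta)$ (for instance that $k$ always sits strictly to the left of position $2k-j_\beta$ whenever a child at $i=j_\beta$ exists), then verify this clause is reproduced under castling. A possibly cleaner alternative, avoiding all such combinatorial book-keeping, is to interpret $F(\beta)$ as a depth-first traversal of the ordered tree associated with $\beta$ (cf.\ Remark~\ref{re}): the Castling procedure then becomes a natural local restructuring of that tree, and properties (a), (b), (c) become transparent features of the traversal.
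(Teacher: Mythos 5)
Your overall strategy --- induct along the recursive construction of $F$, carrying a strengthened invariant that the prefix $0\,1\cdots(j_\beta-1)$ and an asterisk suffix of length $j_\beta$ survive intact, where $j_\beta$ is the rightmost nonzero position --- is essentially the paper's own argument in different clothing: the paper observes that the castling positions used along the path from $0^{k-1}$ to $\alpha$ in $\mathcal{T}_k$ are non-increasing, so the inner window $X|Y$ never shrinks, which is exactly your prefix/suffix invariant. So the route is the same; the issue is that two of the steps you assert are justified incorrectly, and the genuinely hard case is deferred rather than proved.

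First, the well-definedness of $\Omega$. You claim the leftmost entry of $U$ is a positive number ``by (b) applied after the number $i-1$,'' but (b) only constrains what a number is when a number follows; it does not exclude an asterisk immediately after $i-1$ (indeed $F(1)=02*1*$ has an asterisk right after $2$). For $i<j_\beta$ the prefix invariant gives $h_i=i$, but for $i=j_\beta$ you need the separate fact that position $j_\beta$ of $F(\beta)$ holds the first entry of the block $Y'$ from the castling that produced $\beta$, which is the number $\Omega'+1$. Second, your argument that $\Omega<k$ ``because $U$ contains a number smaller than $k$'' is a non sequitur: for $k=3$, $\beta=12$, $F(\beta)=03*2*1*$ and $i=1$, the block $U=3*2*1$ contains $1$ and $2$ yet begins with $3=k$. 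What saves the theorem there is that $13$ is not a $3$-germ ($a_1\le a_2+1$ fails), i.e.\ the bound $\Omega<k$ must come from the germ constraint $a_i\le a_{i+1}+1$, not from the contents of $U$. Finally, the case $i=j_\beta$ --- precisely where the prefix/suffix invariant gives you nothing about $h_{j_\beta}$ or $h_{2k-j_\beta}$, and where both the $*k$-avoidance and the bound on $\Omega$ are at stake --- is handled only by announcing an auxiliary clause ``to be threaded through the induction and verified''; that verification is the substance of the theorem and is missing. Your closing suggestion to read $F(\beta)$ as the DFS code of the ordered tree $T_\beta$ (Remark~\ref{re}) and castling as a local tree surgery is indeed the cleanest way to discharge all three points at once, but as written it is an alternative plan, not a proof.
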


\begin{proof} Let $\alpha=a_{k-1}\cdots a_1\ne 0^{k-1}$ be a $k$-germ. In the sequence of applications of 1-3 along the path from root $0^{k-1}$ to $\alpha$ in ${\mathcal T}_k$, unit augmentation of $a_i$ for larger values of $i$, ($0<i<k$), must occur earlier, and then in strictly descending order of the entries $i$ of the intermediate $k$-germs. As a result, the length of the inner substring $X|Y$ is kept non-decreasing after each application. This is illustrated in Table I below, where the order of presentation of $X$ and $Y$ is reversed in successively decreasing steps. In the process, (a)-(c) are seen to be fulfilled.

The 3 successive subtables in Table I have $C_k$ rows each, where $C_2=2$, $C_3=5$ and $C_4=14$; in the subtables, the $k$-germs $\alpha$ are shown both on the second and last columns via natural enumeration in the first column; the images $F(\alpha)$ of those $\alpha$ are on the penultimate column;
 the remaining columns in the table are filled, from the second row on, as follows:
{\bf (i)} $\beta=\beta(\alpha)$, arising in Theorem~\ref{thm1};
{\bf (ii)} $F(\beta)$, taken from the penultimate column in the previous row;
{\bf (iii)} the length $i$ of $W^i$ and $Z^i$ ($k-1\ge i\ge 1$);
{\bf (iv)} the decomposition $W^i|Y|X|Z^i$ of $F(\beta)$;
{\bf (v)} the decomposition $W^i|X|Y|Z^i$ of $F(\alpha)$, re-concatenated in the following, penultimate, column as $F(\alpha)$, with $\alpha=F^{-1}(F(\alpha))$ in the last column.  
\end{proof}

\begin{remark}\label{re} 
As in the case of ${\mathcal T}_k$ in Theorem~\ref{thm1}, an {\it ordered (rooted) tree} \cite{gmn} is a tree $T$ with: {\bf(a)} a specified node $v_0$ as the root of $T$; {\bf(b)} an embedding of $T$ into the plane with $v_0$ on top; {\bf(c)} the edges between the $j$- and $(j+1)$-levels of $T$ (formed by the nodes at distance $j$ and $(j+1)$ from $v_0$, where $0\le j<$ height$(T)$) having (parent) nodes at the $j$-level above their children at the $(j+1)$-level;  {\bf(d)} the children in (c) ordered in a left-to-right fashion. 
Each $k$-edge ordered tree $T$ is both represented by a $k$-germ $\alpha$ and by its associated $(2k+1)$-string $F(\alpha)$, so we write $T=T_\alpha$.
 In fact, we perform a depth first search (DFS, or $\rightarrow$DFS) on $T$ with its vertices from $v_0$ downward denoted as $v_i$ ($i=0,1,\ldots,k$) in a right-to-left breadth-first search ($\leftarrow$BFS) way.
Such DFS yields the claimed $F(\alpha)$ by writing successively:
\begin{enumerate}
\item[{\bf(i)}] the subindex $i$ of each $v_i$ in the $\rightarrow$DFS downward appearance and 

\item[{\bf(ii)}] an asterisk for the edge $e_i$ ending at each child $v_i$ in the $\rightarrow$DFS upward appearance. 
\end{enumerate}
\noindent 
Then, we write: $F(T_\alpha)=F(\alpha).$  
 Now, Theorem~\ref{thm1} can be taken as a {\it tree-surgery transformation} from $T_\beta$ onto $T_\alpha$ for each $k$-germ $\alpha\ne 0^{k-1}$ via the vertices $v_i$ and edges $e_i$ (whose parent vertices are generally reattached in different ways). This remark is used in Sections~\ref{2fact}-\ref{rootk} in reinterpreting the Middle-Levels  Theorem.
(In Section~\ref{rootk}, an alternate viewpoint on ordered trees taking $a_k$ as the root instead of $a_0$ is considered). 
\end{remark}

To each $F(\alpha)$ corresponds a binary $n$-string $\theta(\alpha)$ of weight $k$ obtained by replacing each number in $[k]=\{0,1,\ldots,k-1\}$ by 0 and each asterisk $*$ by 1. By attaching the entries of $F(\alpha)$ as subscripts to the corresponding entries of $\theta(\alpha)$, a subscripted binary $n$-string $\hat{\theta}(\alpha)$ is obtained, as shown for $k=2,3$ in the 4th column of Table II below. Let $\aleph(\theta(\alpha))$ be given by the {\it complemented reversal} of $\theta(\alpha)$, that is:
\begin{eqnarray}\label{d2}\mbox{if }\theta(\alpha)=a_0a_1\cdots a_{2k}\mbox{, then }\aleph(\theta(\alpha ))=\bar{a}_{2k}\cdots\bar{a}_1\bar{a}_0,\end{eqnarray} where $\bar{0}=1$ and $\bar{1}=0$. A subscripted version
$\hat{\aleph}$ of $\aleph$ is obtained for $\hat{\theta}(\alpha)$, as shown in the fifth column of Table II, with the subscripts of $\hat{\aleph}$ reversed with respect to those of $\aleph$.
Each image of a $k$-germ $\alpha$ under $\aleph$ is an $n$-string of weight $k+1$ and has the 1's indexed with numeric subscripts and the 0's indexed with the asterisk subscript.
The number subscripts reappear from Section~\ref{s8} on
 as lexical colors \cite{KT} for the graphs $M_k$.

\begin{center}TABLE II
$$\begin{array}{||c||c|c|c|c|c||}
m&\alpha&\theta(\alpha)&\hat{\theta}(\alpha)&\hat{\aleph}(\theta(\alpha))=\aleph(\hat{\theta}(\alpha))&\aleph(\theta(\alpha))\\
\hline\hline
0&\;0&00011&0_00_10_21_*1_*&0_*0_*1_21_11_0&00111  \\\hline
1&\;1&00101& 0_00_21_*0_11_*&0_*1_10_*1_21_0&01011\\\hline\hline
0&00&0000111&0_00_10_20_31_*1_*1_*&0_*0_*0_*1_31_21_11_0&0001111 \\\hline
1&01&0001101&0_00_20_31_*1_*0_11_*&0_*1_10_*0_*1_31_21_0&0100111 \\\hline
2&10&0001011&0_00_10_31_*0_21_*1_*&0_*0_*1_20_*1_31_11_0&0010111 \\\hline
3&11&0010011&0_00_21_*0_10_31_*1_*&0_*0_*1_31_10_*1_21_0&0011011 \\\hline
4&12&0010101&0_00_31_*0_21_*0_11_*&0_*1_10_*1_20_*1_31_0&0101011 \\\hline\hline
\end{array}$$\end{center}

\section{Translations mod $n=2k+1$}\label{s4}

Let $n=2k+1$. The $n$-{\it cube graph} $H_n$ is the Hasse diagram of the Boolean lattice $2^{[n
]}$ on the set $[n]=\{0,\ldots,n-1\}$. It is convenient to express each vertex $v$ of $H_n$ in 3 different equivalent ways:
\begin{enumerate}
\item[\bf (a)] ordered set
$A=\{a_0,a_1,\ldots,a_{j-1}\}=a_0a_1\cdots a_{j-1}\subseteq [n]$ that $v$
represents, ($0<j\le n$);
\item[\bf (b)] characteristic binary $n$-vector $B_A=(b_0,b_1,\ldots,b_{n-1})$ of ordered set $A$ in (a)
above, where $b_i=1$ if and only if $i\in A$, ($i\in[n]$);
\item[\bf (c)] polynomial $\epsilon_A(x)=b_0+b_1x+\cdots
+b_{n-1}x^{n-1}$ associated to $B_A$ in (b) above.
\end{enumerate}
\noindent Ordered set $A$ and vector $B_A$ in (a) and (b) respectively are written for short as $a_0a_1\cdots a_{j-1}$ and $b_0b_1\cdots b_{n-1}$.
$A$ is said to be the {\it support} of $B_A$. 

For each $j\in[n]$, let
$L_j=\{A\subseteq[n]\mbox{ with }|A|=j\}$ be the $j$-{\it level} of $H_n$.
Then, $M_k$ is the subgraph of $H_n$ induced by $L_k\cup L_{k+1}$, for $1\le k\in\mathbb{Z}$. By viewing the elements of $V(M_k)=L_k\cup L_{k+1}$ as polynomials, as in (c) above, a regular (i.e., free and transitive) {\it translation mod} $n$
action $\Upsilon'$ of $\mathbb{Z}_n$ on $V(M_k)$ is seen to exist, given by:
\begin{eqnarray}\label{d3}\Upsilon':\mathbb{Z}_n\times V(M_k)\rightarrow V(M_k)\mbox{, with   }\Upsilon'(i,v)=v(x)x^i\mbox{ (mod }1+x^n),\end{eqnarray}
where $v\in V(M_k)$ and $i\in\mathbb{Z}_n$.  Now, $\Upsilon'$ yields a quotient graph $M_k/\pi$ of $M_k$, where $\pi$ stands for
the equivalence relation on $V(M_k)$ given by: $$\epsilon_A(x)\pi\epsilon_{A'}(x)\Longleftrightarrow\exists\,i\in\mathbb{Z}\mbox{ with }\epsilon_{A'}(x)\equiv x^i\epsilon_A(x)\mbox{ (mod }1+x^n),$$ with $A,A'\in V(M_k)$.
This is to be used in the proofs of Theorems~\ref{th4} and~\ref{thm23}. Clearly, $M_k/\pi$ is the graph whose vertices are the equivalence classes of $V(M_k)$ under $\pi$.
Also, $\pi$ induces a partition of $E(M_k)$ into equivalence classes, to be taken as the edges of $M_k/\pi$.

\section{Complemented Reversals}\label{s6}

Let $(b_0b_1\cdots b_{n-1})$ denote the class of $b_0b_1\cdots b_{n-1}\in L_i$ in $L_i/\pi$. Let $\rho_i:L_i\rightarrow L_i/\pi$ be the canonical
projection given by assigning $b_0b_1\cdots b_{n-1}$ to $(b_0b_1\cdots b_{n-1})$, for $i\in\{k,k+1\}$.
The definition of $\aleph$ in display~(\ref{d2}) is easily extended to a bijection, again denoted $\aleph$, from $L_k$ onto $L_{k+1}$. Let
$\aleph_\pi:L_k/\pi\rightarrow L_{k+1}/\pi$ be given by
$\aleph_\pi((b_0b_1\cdots b_{n-1}))=(\bar{b}_{n-1}\cdots\bar{b}_1\bar{b_0})$. Observe $\aleph_\pi$ is a bijection. Notice the commutative identities $\rho_{k+1}\aleph=\aleph_\pi\rho_k$ and $\rho_k\aleph^{-1}=\aleph_\pi^{-1}\rho_{k+1}$.

The following geometric representations will be handy. List vertically the vertex parts $L_k$ and $L_{k+1}$ of $M_k$ (resp. $L_k/\pi$ and $L_{k+1}/\pi$ of $M_k/\pi$) so as to display a splitting of $V(M_k)=L_k\cup L_{k+1}$ (resp. $V(M_k)/\pi=L_k/\pi\cup L_{k+1}/\pi$) into pairs, each pair contained in a horizontal line, the 2 composing vertices of such pair equidistant from a vertical line $\phi$ (resp. $\phi/\pi$, depicted through $M_2/\pi$ on the left of Figure 1, Section~\ref{s7} below). In addition, we impose that
each resulting horizontal vertex pair in $M_k$ (resp. $M_k/\pi$) be of the form $(B_A,\aleph(B_A))$ (resp. $((B_A),(\aleph(B_A))=\aleph_\pi((B_A)))$), disposed from left to right at both sides of $\phi$. A non-horizontal edge  of $M_k/\pi$ will be said to be a {\it skew edge}.

\begin{theorem}\label{thm3}
To each skew edge $e=(B_{A})(B_{A'})$ of $M_k/\pi$ corresponds another skew edge $\aleph_\pi((B_{A}))\aleph^{-1}_\pi((B_{A'}))$ obtained from $e$ by reflection on the line $\phi/\pi$. Moreover:

{\bf (i)} the skew edges of $M_k/\pi$ appear in pairs, with the endpoints of the edges in each pair 

\hspace*{5mm} forming 2 horizontal pairs of vertices equidistant from $\phi/\pi$;

{\bf (ii)} each horizontal edge of $M_k/\pi$ has multiplicity equal either to 1 or to 2.
\end{theorem}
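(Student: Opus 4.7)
The plan is to exhibit $\aleph_\pi$ as a graph automorphism of $M_k/\pi$ coinciding with geometric reflection on $\phi/\pi$, and then extract (i) and (ii) as structural consequences of this symmetry. First I would verify that complemented reversal $\aleph$ is an involutive automorphism of $M_k$ interchanging $L_k$ and $L_{k+1}$: involutivity and the level swap are immediate from~(\ref{d2}), and adjacency preservation follows from the formula $\aleph(A\cup\{i\})=\aleph(A)\setminus\{n-1-i\}$. The intertwining identity $\aleph(v(x)\,x^j)\equiv x^{-j}\,\aleph(v(x))\pmod{1+x^n}$ then shows $\aleph$ respects $\pi$ and descends to an involutive automorphism $\aleph_\pi$ of $M_k/\pi$. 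By the chosen horizontal arrangement of vertex pairs, $\aleph_\pi$ acts on the drawing exactly as reflection on $\phi/\pi$, so it sends each edge $(B_A)(B_{A'})$ to the edge $\aleph_\pi((B_A))\,\aleph_\pi^{-1}((B_{A'}))$ of $M_k/\pi$.

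For (i), reflection preserves the horizontal/skew dichotomy because it fixes every horizontal line setwise. A skew edge cannot be fixed: $\aleph_\pi$ exchanges $L_k/\pi$ and $L_{k+1}/\pi$ so no single endpoint is fixed, and swapping the two endpoints would make them horizontal partners, contradicting skewness. Hence every skew edge has a distinct reflected partner, and the four endpoints of such a pair form two horizontal vertex pairs equidistant from $\phi/\pi$, establishing (i).

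For (ii), each horizontal edge is reflected to another edge between the same pair of endpoints, so its multiplicity is either $1$ (fixed by reflection) or at least $2$ (paired); the upper bound of $2$ is what I expect to be the main obstacle, since the reflection argument alone does not exclude $4$, $6$, or more parallel edges. To close this gap I would note that horizontal edges between $[A]$ and $[\aleph(A)]$ biject with $N(A)=\{i\in A^c:A\cup\{i\}\in[\aleph(A)]\}$, and writing $A\cup\{i\}=\aleph(A)+l$ and applying $\aleph$ forces $l\equiv 2i+1\pmod{n}$. This yields, for each $i\in N(A)$, the polynomial identity
\[
\chi_{A^c}(x)\;=\;x^{2i+1}\chi_{A^r}(x)+x^i \pmod{x^n-1},
\]
where $\chi_S(x)=\sum_{s\in S}x^s$. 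Evaluated at a primitive $n$-th root of unity $\omega$, this becomes a quadratic in $y=\omega^i$ when $\chi_{A^r}(\omega)\neq 0$ and a linear constraint $\omega^i=\chi_{A^c}(\omega)$ otherwise. In either case, primitivity of $\omega$ forces at most two (respectively one) distinct values of $\omega^i$ among $i\in N(A)$, yielding $|N(A)|\le 2$ and completing the bound.
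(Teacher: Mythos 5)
Your proposal is correct, and while the setup (exhibiting $\aleph_\pi$ as an involutive automorphism that realizes the reflection about $\phi/\pi$, whence the pairing of skew edges in (i)) matches the paper's, your proof of the multiplicity bound in (ii) is genuinely different. The paper argues combinatorially: a horizontal edge at $v\in L_k/\pi$ corresponds to a representative $\bar b_k\cdots\bar b_1 0 b_1\cdots b_k$, and two such representatives force the cyclic string into a quasi-periodic pattern governed by a ``feasible substring'' $\sigma$, whose two lateral periodic continuations are then argued to preclude a third anti-palindromic center. Your route instead encodes the condition algebraically: I checked that $A\cup\{i\}=\aleph(A)+l$ together with $\aleph(S+t)=\aleph(S)-t$ does force $l\equiv 2i+1\pmod n$, and that the resulting congruence $\chi_{A^c}(x)\equiv x^{2i+1}\chi_{A^r}(x)+x^i\pmod{x^n-1}$ (with $A^r=\{n-1-a:a\in A\}$) is correct; evaluating at a primitive $n$-th root of unity $\omega$ turns membership $i\in N(A)$ into a root condition for the quadratic $\omega\chi_{A^r}(\omega)\,y^2+y-\chi_{A^c}(\omega)=0$ in $y=\omega^i$, and injectivity of $i\mapsto\omega^i$ on $\mathbb{Z}_n$ gives $|N(A)|\le 2$ at once. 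Two small points worth making explicit: translations act with trivial stabilizer on $[A]$ (since $\gcd(k,2k+1)=1$), so edge classes between $[A]$ and $[\aleph(A)]$ are indeed counted by $N(A)$ and not undercounted by it; and only the implication from $i\in N(A)$ to the polynomial identity is needed, so the ``bijection'' claim can be weakened to surjectivity. Your approach buys a short, fully rigorous bound where the paper's periodicity argument is the least transparent step; the paper's approach buys explicit structural information --- the feasible substrings $\sigma$ parametrize exactly which vertices carry two parallel horizontal edges --- which the text reuses immediately afterwards in the discussion of $\lambda_h$ and the examples $(0^{k+1}1^k)$, $(0(01)^k)$.
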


\begin{proof}
The skew edges $B_{A}B_{A'}$ and $\aleph^{-1}(B_{A'})\aleph(B_{A})$ of $M_k$ are
reflection of each other about $\phi$. Their endopoints form 2 horizontal pairs $(B_{A},\aleph(B_{A'}))$ and $(\aleph^{-1}(B_{A}),B_{A'})$ of vertices. Now, $\rho_k$ and $\rho_{k+1}$ extend together to a covering graph map $\rho:M_k\rightarrow M_k/\pi$, since the edges accompany the
projections correspondingly, exemplified for $k=2$ as follows:
$$^{\;\;\;\;\,\aleph((B_A))=\;\;\;\;\aleph((00011))=\;\;\;\;\aleph(\{00011,10001,11000,01100,00110\})=\{00111,01110,11100,11001,10011\}=(00111),}
_{\aleph^{-1}((B_A'))=\aleph^{-1}((01011))=\aleph^{-1}(\{01011,10110,10110,11010,10101\})=\{00101,10010,01001,10100,01010\}=(00101).}$$
Here, the order of the elements in the image of class $(00011)$ (resp. $(01011)$) mod $\pi$ under $\aleph$ (resp. $\aleph^{-1}$) are shown reversed, from right to left (cyclically between braces, continuing on the right once one reaches the leftmost brace). Such reversal holds for every $k>2$:
$$^{\;\;\;\;\,\aleph((B_A))=\hspace*{3.4mm}\aleph((b_0\cdots b_{2k}))=\hspace*{3.4mm}\aleph(\{b_0\cdots b_{2k},\;b_{2k}\ldots b_{2k-1},\;\ldots,\;b_1\cdots
b_0\})=
\{\bar{b}_{2k}\cdots\bar{b}_0,\;\bar{b}_{2k-1}\cdots\bar{b}_{2k},\;\ldots,\;\bar{b}_1\cdots\bar{b}_0\}=
(\bar{b}_{2k}\cdots\bar{b}_0),}
_{\aleph^{-1}((B_A'))=\aleph^{-1}((\bar{b}'_{2k}\cdots\bar{b}'_0))=\aleph^{-1}(\{\bar{b}'_{2k}\cdots\bar{b}'_0,\;\bar{b}'_{2k-1}\cdots\bar{b}'_{2k},\;\ldots,\;\bar{b}'_1\cdots\bar{b}'_0\})=
\{b'_0\cdots b'_{2k},\;b'_{2k}\cdots b'_{2k-1},\;\ldots,\;b'_1\cdots b'_0\}=(b'_0\cdots b'_{2k}),}$$
where $(b_0\cdots b_{2k})\in L_k/\pi$ and $(b'_0\cdots b'_{2k})\in L_{k+1}/\pi$. This establishes (i).

Every horizontal edge $v\aleph_\pi(v)$ of $M_k/\pi$ has $v\in L_k/\pi$ represented by  $\bar{b}_k\cdots \bar{b}_10b_1\cdots b_k$ in $L_k$, (so $v=(\bar{b}_k\cdots \bar{b}_10b_1\cdots b_k)$). There are $2^k$ such vertices in $L_k$ and at most $2^k$ corresponding vertices in $L_k/\pi$. For example, $(0^{k+1}1^k)$ and $(0(01)^k)$ are endpoints in $L_k/\pi$ of 2 horizontal edges of $M_k/\pi$, each. To prove that this implies (ii), we have to see that there cannot be more than 2 representatives $\bar{b}_k\cdots \bar{b}_1b_0b_1\cdots b_k$ and $\bar{c}_k\cdots \bar{c}_1c_0c_1\cdots c_k$ of a vertex $v\in L_k/\pi$, with $b_0=0=c_0$. Such a $v$ is expressible as $v=(d_0 \cdots b_0d_{i+1}\cdots d_{j-1}c_0\cdots d_{2k})$, with $b_0=d_i$, $c_0=d_j$ and $0<j-i\le k$. Let the substring $\sigma=d_{i+1}\cdots d_{j-1}$ be said $(j-i)$-{\it feasible}. Let us see that every $(j-i)$-feasible substring $\sigma$ forces in $L_k/\pi$ only vertices $\omega$ leading to 2 different (parallel) horizontal edges in $M_k/\pi$ incident to $v$. In fact, periodic continuation mod $n$ of $d_0\cdots d_{2k}$ both to the right of $d_j=c_0$ with minimal cyclic substring
$\bar{d}_{j-1}\cdots\bar{d}_{i+1}1d_{i+1}\cdots d_{j-1}0=P_r$ and to the left of $d_i=b_0$ with minimal cyclic substring $0d_{i+1}\cdots d_{j-1}1\bar{d}_{j-1}\cdots\bar{d}_{i+1}=P_\phi$ yields a 2-way infinite string that winds up onto a class $(d_0\cdots d_{2k})$ containing such an $\omega$. For example, some pairs of feasible substrings $\sigma$ and resulting vertices $\omega$ are: 
$$^{(\sigma,\omega)\;\;=\;\;(\emptyset,({\rm o}{\rm o}1)),\;\;(0,({\rm o}0{\rm o}11)),\;\;(1,({\rm o}1{\rm o})),\;\;(0^2,({\rm o}00{\rm o}111)),\;\;(01,({\rm o}01{\rm o}011)),\;\;(1^2,\,{\rm o}11{\rm o}0)),}_{(0^3,{\rm o}000{\rm o}1111)),\;\;
(010,({\rm o}010{\rm o}101101)),\;\;(01^2,({\rm o}011{\rm o})),\;\;(101,({\rm o}101{\rm o})),\;\;
(1^3,({\rm o}111{\rm o}00)),}$$ with `o' replacing $b_0=0$ and $c_0=0$,
and where $k=\lfloor\frac{n}{2}\rfloor$ has successive values $k=1,2,1,3,3,2,4,5,2,2,3$.
If $\sigma$ is a feasible substring and $\bar{\sigma}$ is its complemented reversal via $\aleph$, then the possible symmetric substrings $P_\phi\sigma P_r$ about $\rm{o}\sigma\rm{o}=0\sigma 0$ in a vertex $v$ of $L_k/\pi$ are in order of ascending length:
$$\begin{array}{c}
^{\hspace*{2mm}0\sigma 0,}_{\bar{\sigma}0\sigma 0\bar{\sigma},}\\
^{\hspace*{2mm}1\bar{\sigma}0\sigma 0\bar{\sigma}1,}
_{\sigma 1\bar{\sigma}0\sigma 0\bar{\sigma}1\sigma,}\\
^{\hspace*{2mm}0\sigma 1\bar{\sigma}0\sigma 0\bar{\sigma}1\sigma 0,}
_{\bar{\sigma}0\sigma 1\bar{\sigma}0\sigma 0\bar{\sigma}1\sigma 0\bar{\sigma},}\\
^{1\bar{\sigma}0\sigma 1\bar{\sigma}0\sigma 0\bar{\sigma}1\sigma 0\bar{\sigma}1,}
_{\cdots\cdots\cdots\cdots\cdots\cdots\cdots\cdots\cdots\cdots,}\\
\end{array}$$
\noindent where we use again `0' instead of `o' for the entries immediately preceding and following the shown central copy of $\sigma$. The lateral periods of $P_r$ and $P_\phi$ determine each one horizontal edge at $v$ in $M_k/\pi$ up to returning to $b_0$ or $c_0$,
so no entry $e_0=0$ of $(d_0\cdots d_{2k})$ other than $b_0$ or
$c_0$ happens such that $(d_0\cdots d_{2k})$ has a third representative
$\bar{e}_k\cdots \bar{e}_10e_1\cdots e_k$ (besides $\bar{b}_k\cdots
\bar{b}_10b_1\cdots b_k$ and $\bar{c}_k\cdots \bar{c}_10c_1\cdots
c_k$). Thus, those 2 horizontal edges are produced solely from the feasible
substrings $d_{i+1}\cdots d_{j-1}$ characterized above.
\end{proof}

To illustrate Theorem~\ref{thm3}, let $1<h<n$ in $\mathbb{Z}$ be such that $\gcd(h,n)=1$ and let $\lambda_h:L_k/\pi\rightarrow L_k/\pi$ be given by $\lambda_h((a_0a_1\cdots a_n))\rightarrow(a_0a_ha_{2h}\cdots a_{n-2h}a_{n-h}).$
For each such $h\le k$, there is at least one $h$-feasible substring $\sigma$ and a resulting associated vertex
$v\in L_k/\pi$ as in the proof of Theorem~\ref{thm3}. For example, starting at $v=(0^{k+1}1^k)\in L_k/\pi$ and applying $\lambda_h$ repeatedly produces a number of such vertices $v\in L_k/\pi$. If we assume $h=2h'$ with $h'\in\mathbb{Z}$,
then an $h$-feasible substring $\sigma$ has the form $\sigma=\bar{a}_1\cdots\bar{a}_{h'}a_{h'}\cdots a_1$, so there are at least $2^{h'}=2^{\frac{h}{2}}$ such $h$-feasible substrings.

\section{Dihedral Quotients}\label{s7}

An {\it involution} of a graph $G$ is a graph map $\aleph:G\rightarrow G$ such that $\aleph^2$ is the identity.
If $G$ has an involution, an {\it $\aleph$-folding} of $G$ is a graph $H$, possibly with loops, whose vertices $v'$ and edges or loops $e'$ are respectively of the form $v'=\{v,\aleph(v)\}$ and $e'=\{e,\aleph(e)\}$, where $v\in V(G)$ and $e\in E(G)$; $e$ has endvertices $v$ and $\aleph(v)$ if and only if $\{e,\aleph(e)\}$ is a loop of $G$.

Note that both maps $\aleph:M_k\rightarrow M_k$ and $\aleph_\pi:M_k/\pi\rightarrow M_k/\pi$ in Section~\ref{s6} are involutions.
Let $\langle B_A\rangle$ denote each horizontal pair $\{(B_A),\aleph_\pi((B_A))\}$ (as in Theorem~\ref{thm3}) of $M_k/\pi$, where $|A|=k$.
An $\aleph$-folding $R_k$ of $M_k/\pi$ is obtained whose vertices are the
pairs $\langle B_A\rangle$ and having:

{\bf (1)} an edge $\langle B_A\rangle\langle B_{A'}\rangle$ per skew-edge pair $\{(B_A)\aleph_\pi((B_{A'})),(B_{A'})\aleph_\pi((B_A))\};$

{\bf (2)} a loop at $\langle B_A\rangle$ per horizontal edge
$(B_A)\aleph_\pi((B_A))$; because of Theorem~\ref{thm3}, there may

\hspace*{7mm} be up to 2 loops at each vertex of $R_k$.

\begin{theorem}\label{th4} $R_k$ is a quotient graph of $M_k$ under an action
 $\Upsilon:D_{2n}\times M_k\rightarrow M_k$.\end{theorem}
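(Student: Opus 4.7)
The plan is to construct $\Upsilon$ by fusing the regular translation action $\Upsilon'$ of Section~\ref{s4} with the complemented-reversal involution $\aleph$ of Section~\ref{s6}. Presenting $D_{2n}=\langle\tau,\sigma\mid\tau^n=\sigma^2=1,\;\sigma\tau\sigma=\tau^{-1}\rangle$, I would set $\Upsilon(\tau,v)=\Upsilon'(1,v)=xv(x)\pmod{1+x^n}$ and $\Upsilon(\sigma,v)=\aleph(v)$, where $\aleph$ is understood to act on $L_{k+1}$ by $\aleph^{-1}$ so as to be an honest involution of $V(M_k)=L_k\cup L_{k+1}$. The first check is the dihedral relation: writing $v=b_0b_1\cdots b_{n-1}$ and chasing indices mod $n$, one verifies that both $\aleph(\Upsilon'(i,v))$ and $\Upsilon'(-i,\aleph(v))$ have $\ell$-th entry $\bar b_{n-1-\ell-i}$, so $\aleph\,\Upsilon'(i,\cdot)=\Upsilon'(-i,\cdot)\,\aleph$, which is exactly the defining relation of $D_{2n}$. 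One also checks that $\aleph$ is a graph automorphism of $M_k$: it swaps $L_k$ and $L_{k+1}$ while carrying the covering relation $A\subset A'$ to $\aleph(A')\subset\aleph(A)$, as already implicit in the identities $\rho_{k+1}\aleph=\aleph_\pi\rho_k$ and $\rho_k\aleph^{-1}=\aleph_\pi^{-1}\rho_{k+1}$ of Section~\ref{s6}.

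The second step is to identify the $\Upsilon$-orbits on $V(M_k)$ and $E(M_k)$ with, respectively, the vertices and the edges (or loops) of $R_k$. On vertices, the $\mathbb{Z}_n$-orbit of $B_A$ is the class $(B_A)$, and the subsequent $\aleph$-fusion joins $(B_A)$ with $(\aleph(B_A))=\aleph_\pi((B_A))$, producing exactly the horizontal pair $\langle B_A\rangle$ declared a vertex of $R_k$ in Section~\ref{s7}. On edges, Theorem~\ref{thm3}(i) shows that every skew edge of $M_k/\pi$ has a distinct $\aleph_\pi$-partner, the pair being identified in $R_k$ to a single edge $\langle B_A\rangle\langle B_{A'}\rangle$; lifting along $\rho:M_k\to M_k/\pi$, this edge is precisely the image of a single $D_{2n}$-orbit of skew edges of $M_k$. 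Horizontal edges of $M_k/\pi$ are setwise fixed by $\aleph_\pi$ and therefore descend to loops of $R_k$.

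The main obstacle is the multiplicity of loops: one must argue that two loops at $\langle B_A\rangle$ in $R_k$ genuinely correspond to two distinct $D_{2n}$-orbits of horizontal edges of $M_k$, rather than to a single orbit counted twice. This is delivered by the analysis of feasible substrings in the proof of Theorem~\ref{thm3}, which supplies at most two horizontally-forced representatives of any $v\in L_k/\pi$ and hence at most two non-equivalent loops at the folded vertex, matching the bound in Theorem~\ref{thm3}(ii). Once vertex and edge orbits are matched in this way, the composite projection $M_k\to M_k/\pi\to R_k$ (first via $\rho$, then via the $\aleph_\pi$-folding) exhibits $R_k$ as the quotient of $M_k$ under $\Upsilon$, completing the argument.
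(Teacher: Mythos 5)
Your proposal is correct and follows essentially the same route as the paper: both construct $\Upsilon$ by combining the translation action $\Upsilon'$ with the involution $\aleph$ and verify the dihedral compatibility relation $\aleph\,\Upsilon'(i,\cdot)=\Upsilon'(-i,\cdot)\,\aleph$ by chasing indices (the paper phrases $D_{2n}$ as the semidirect product $\mathbb{Z}_n\rtimes_\varrho\mathbb{Z}_2$ rather than by generators and relations, but the computation is the same). Your additional explicit matching of $\Upsilon$-orbits with the vertices, edges and loops of $R_k$ via Theorem~\ref{thm3} is detail the paper leaves implicit in the Section~\ref{s7} construction, and it is a sound supplement rather than a departure.
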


\begin{figure}[htp]
\hspace*{26.5mm}
\includegraphics[scale=0.2]{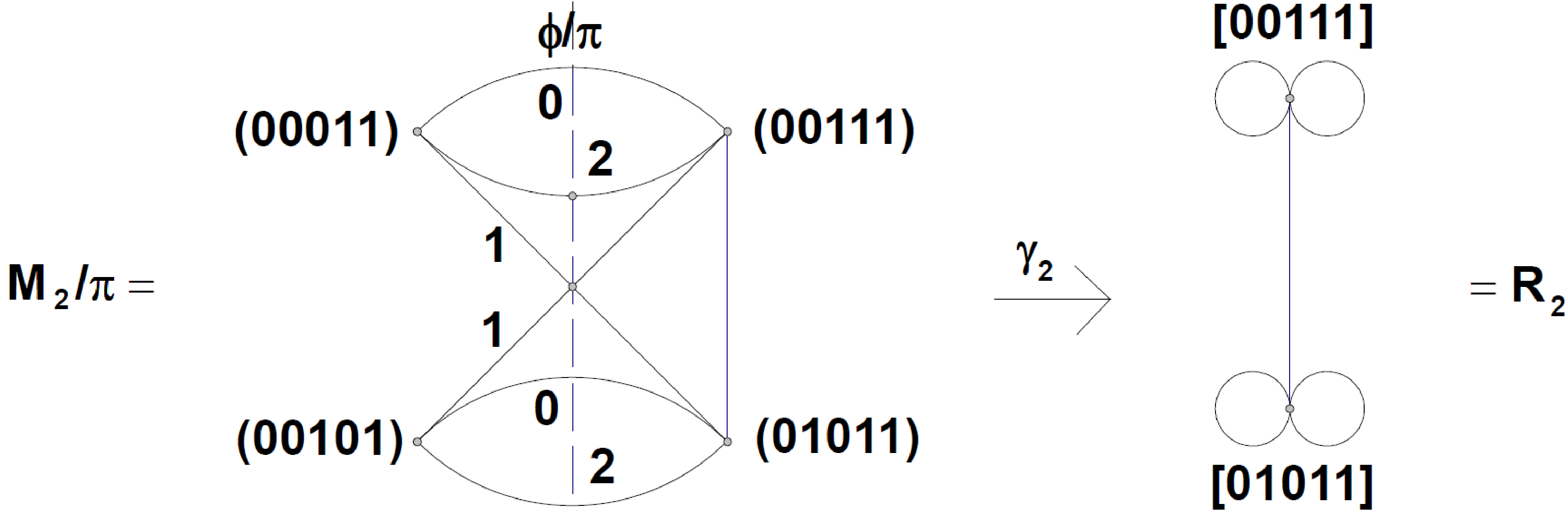}
\caption{Reflection symmetry of $M_2/\pi$ about a line $\phi/\pi$ and resulting graph map $\gamma_2$}
\end{figure}

\begin{proof} 
$D_{2n}$ is the semidirect product $\mathbb{Z}_n\rtimes_\varrho\mathbb{Z}_2$ via the group homomorphism $\varrho:\mathbb{Z}_2\rightarrow\mbox{Aut}(\mathbb{Z}_n)$, where $\varrho(0)$ is the identity and $\varrho(1)$ is the automorphism $i\rightarrow(n-i)$, $\forall i\in\mathbb{Z}_n$. If $*:D_{2n}\times D_{2n}\rightarrow D_{2n}$ indicates group multiplication and $i_1,i_2\in\mathbb{Z}_n$, then  $(i_1,0)*(i_2,j)=(i_1+i_2,j)$ and
$(i_1,1)*(i_2,j)=(i_1-i_2,\bar{j})$, for $j\in\mathbb{Z}_2$.
Set $\Upsilon((i,j),v)=\Upsilon'(i,\aleph^j(v))$, $\forall i\in\mathbb{Z}_n, \forall j\in\mathbb{Z}_2$, where $\Upsilon'$ is as in display~(\ref{d3}).
Then, $\Upsilon$ is a well-defined $D_{2n}$-action on $M_k$. By writing $(i,j)\cdot v=\Upsilon((i,j),v)$ and $v=a_0\cdots a_{2k}$, we have
$(i,0)\cdot v=a_{n-i+1}\cdots a_{2k}a_0\cdots a_{n-i}=v'$ and $(0,1)\cdot v'=\bar{a}_{i-1}\cdots \bar{a}_0\bar{a}_{2k}\cdots\bar{a}_i=(n-i,1)\cdot v=((0,1)*(i,0))\cdot v$, leading to the compatibility condition $((i,j)*(i',j'))\cdot v=(i,j)\cdot ((i',j')\cdot v)$.
\end{proof}

Theorem~\ref{th4} yields a graph projection $\gamma_k:M_k/\pi\rightarrow R_k$ for the action $\Upsilon$, given for $k=2$ in Figure 1. In fact, $\gamma_2$ is
associated with reflection of $M_2/\pi$ about the dashed vertical symmetry axis $\phi/\pi$ so that $R_2$ (containing 2 vertices and one edge between them, with each vertex incident to 2 loops) is given as its image. Both the representations of $M_2/\pi$ and $R_2$ in the figure have their edges indicated with colors 0,1,2, as arising inSection~\ref{s8}.

 \section{Lexical Procedure}\label{s8}

Let $P_{k+1}$ be the subgraph of the unit-distance graph of $\mathbb{R}$ (the real line) induced by the set $[k+1]=\{0,\ldots,k\}$.
We draw the grid $\Gamma=P_{k+1}\square P_{k+1}$ in the plane $\mathbb{R}^2$ with a diagonal $\partial$ traced from the lower-left vertex $(0,0)$ to the upper-right vertex $(k,k)$.
For each $v\in L_k/\pi$, there are $k+1$ $n$-tuples of the form
$b_0b_1\cdots b_{n-1}=0b_1\cdots b_{n-1}$ that represent $v$ with
$b_0=0$. For each such $n$-tuple, we construct a $2k$-path $D$ in $\Gamma$ from $(0,0)$ to $(k,k)$ in $2k$ steps indexed from $i=0$ to $i=2k-1$. This leads to a lexical edge-coloring implicit in \cite{KT}; see the following statement and Figure 2 (Section~\ref{s9}), containing examples of such a $2k$-path $D$ in thick trace.

\begin{theorem}\label{kitr} \cite{KT}
Each $v\in L_k/\pi$ has its $k+1$ incident edges assigned colors $0,1,\ldots,k$ by means of the following ``Lexical Procedure'', where $0\le i\in\mathbb{Z}$, $w\in V(\Gamma)$ and $D$ is a path in $\Gamma$.
Initially, let $i=0$, $w=(0,0)$ and $D$ contain solely the vertex $w$. Repeat $2k$ times the following sequence of steps (1)-(3), and then perform once the final steps (4)-(5):

\noindent{\bf (1)} If $b_i=0$, then set $w':=w+(1,0)$; otherwise, set $w':=w+(0,1)$.

\noindent{\bf (2)} Reset $V(D):=v(D)\cup\{w'\}$, $E(D):=E(D)\cup\{ww'\}$, $i:=i+1$ and $w:=w'$.

\noindent{\bf (3)} If $w\ne(k,k)$, or equivalently, if $i<2k$, then go back to step (1).

\noindent{\bf (4)} Set $\check{v}\in L_{k+1}/\pi$ to be the vertex of $M_k/\pi$ adjacent to
$v$ and obtained from its representative

$n$-tuple $b_0b_1\cdots$ $b_{n-1}=0b_1\cdots
b_{n-1}$ by replacing the entry $b_0$ by $\bar{b}_0=1$ in $\check{v}$, keeping the

entries $b_i$ of $v$ unchanged in $\check{v}$ for $i>0$.

\noindent{\bf (5)}
Set the color of the
edge $v\check{v}$ to be the number $c$ of horizontal (alternatively, vertical) arcs

of $D$ above $\partial$.
\end{theorem}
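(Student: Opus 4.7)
My plan is to decompose the statement into three verifications: that the path $D$ is well-defined (ends at $(k,k)$ inside $\Gamma$), that the two arc-counts in the ``alternatively'' clause coincide, and — the essential content — that the $k+1$ representatives of $v$ yield the $k+1$ distinct colors $0,1,\ldots,k$.

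For well-definedness, I would observe that a representative $0b_1\cdots b_{n-1}$ of $v\in L_k/\pi$ has exactly $k$ ones and $k+1$ zeros, so the $2k$ bits driving the procedure contribute exactly $k$ horizontal and $k$ vertical unit steps; therefore $D$ terminates at $(k,k)$ while remaining inside $\Gamma$.

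For the ``alternatively'' clause, I would decompose $D$ at its intersections with the diagonal $\partial$. Each maximal sub-arc of $D$ lying strictly above $\partial$ begins and ends on $\partial$, so it uses equally many horizontal and vertical unit edges. Summing over the above-sub-arcs yields the coincidence of the two global counts, which lies in $\{0,1,\ldots,k\}$ since $D$ has $k$ horizontal (equivalently, $k$ vertical) edges in total.

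The crux is to show that the $k+1$ representatives of $v$ with $b_0=0$ produce $k+1$ pairwise distinct colors, hence realize all of $\{0,1,\ldots,k\}$. Label the $k+1$ zeros of the cyclic string $v$ in cyclic order as $z_0,\ldots,z_k$ and let $\rho_j$ denote the rotation bringing $z_j$ to position $0$. Associate to $v$ the $\pm 1$-walk $W$ obtained by reading $v$ with $0\mapsto+1$ and $1\mapsto-1$; its net increment over $n=2k+1$ steps is $+1$. A Dvoretzky--Motzkin (Cycle Lemma) style argument then ranks the zeros $z_j$ by the value of $W$ at their positions (with a consistent tie-breaking), yielding a bijection between the rotations $\rho_j$ and the colors $\{0,1,\ldots,k\}$. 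The main obstacle is identifying this rank statistic with the arc-count statistic extracted by the Lexical Procedure: I would carry this out by tracking how the path $D_j$ produced by $\rho_j$ transforms as $j$ is incremented, showing that the cyclic shift moves a block from the beginning of $D_j$ to the end and alters the count of arcs above $\partial$ in a controlled way whose cumulative effect around the full $(k+1)$-cycle realizes each color exactly once.
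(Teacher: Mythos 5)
Your proposal correctly isolates the crux --- that the $k+1$ cyclic representatives of $v$ beginning with $0$ must realize the $k+1$ colors $0,1,\ldots,k$ bijectively --- but it does not prove it. The decisive sentence, that the cyclic shift from $\rho_j$ to $\rho_{j+1}$ ``alters the count of arcs above $\partial$ in a controlled way whose cumulative effect around the full $(k+1)$-cycle realizes each color exactly once,'' is a restatement of the conclusion, not an argument: you never define the rank statistic on the zeros $z_0,\ldots,z_k$ (e.g.\ the height $W$ at each zero, lifted along the drift-$+1$ cyclic extension so that the $k+1$ values become comparable), never verify that these ranks are pairwise distinct, and never carry out the identification of that rank with the number of above-diagonal horizontal arcs of $D_j$. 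That identification is the entire content of the theorem; everything else (termination of $D$ at $(k,k)$, equality of the horizontal and vertical above-diagonal counts via decomposition into excursions) is routine and you handle it adequately. As written, the proof is a plan for a proof. A secondary quibble: your well-definedness argument assumes the $2k$ processed bits contain exactly $k$ zeros, which depends on which $2k$ of the $2k+1$ entries the loop actually reads; the paper's own indexing here is ambiguous, but you should at least fix a convention.

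For comparison, the paper's proof is a one-line reformulation: it expresses the color of the edge selected by the zero $x$ as the cardinality of $\{y\in S^c\setminus x:\ |[y,x)\cap S|<|[y,x)\cap S^c|\}$ (a cyclic ballot-type count over the other zeros $y$) and defers the verification that these cardinalities exhaust $\{0,\ldots,k\}$ to Kierstead--Trotter, whose result the theorem explicitly cites. So your intended route --- a self-contained Cycle Lemma argument --- is genuinely different from, and more ambitious than, what the paper does; but to count as a proof it must actually establish the bijection between rotations and ballot-ranks and then match that rank with the lexical arc count, neither of which appears in your text.
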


\begin{proof} 
If addition and subtraction in $[n]$ are taken modulo $n$ and we write $[y,x)=\{y,y+1,y+2,\ldots,x-1\}$, for $x,y\in[n]$, and $S^c=[n]\setminus S$, for $S=\{i\in[n]:b_i=1\}\subseteq[n]$, then the cardinalities of the sets $\{y\in S^c\setminus x: |[y,x)\cap S|<|[y,x)\cap S^c|\}$ yield all the edge
 colors, where $x\in S^c$ varies.\end{proof}

The Lexical Procedure of Theorem~\ref{kitr} yields a 1-fac\-tor\-i\-za\-tion not only for $M_k/\pi$ but also for $R_k$ and $M_k$. This is clarified by the end of Section~\ref{s9}.

\section{Uncastling and Lexical 1-Factorization}\label{s9}

\begin{figure}[htp]
\includegraphics[scale=0.275]{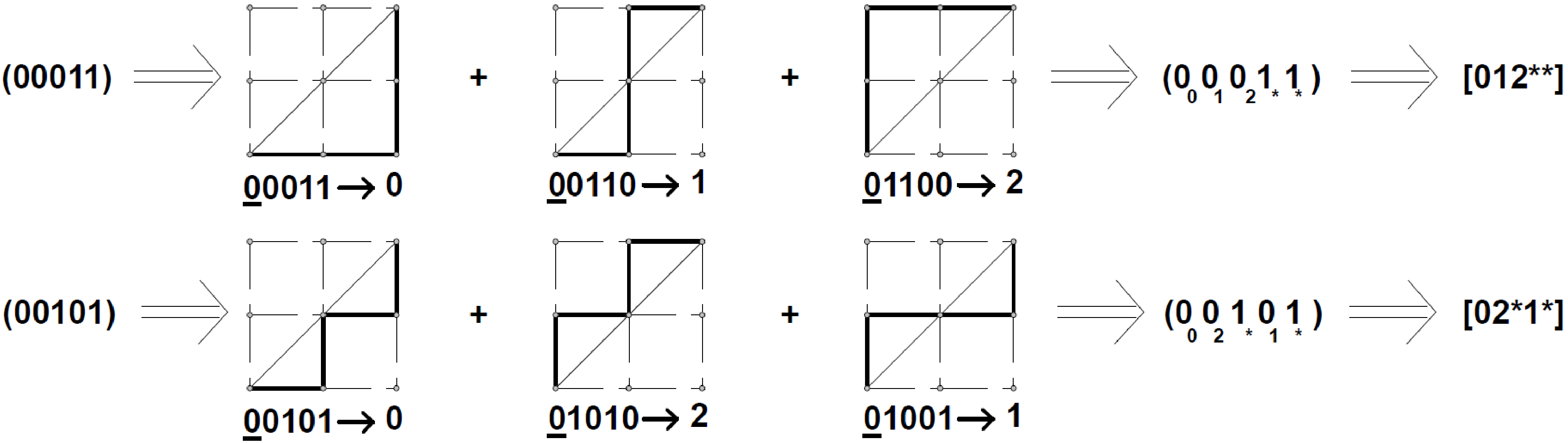}
\caption{Representing lexical-color assignment for $k=2$}
\end{figure}

A notation $\delta(v)$ is assigned to each pair $\{v,\aleph_\pi(v)\}\in R_k$, where $v\in L_k/\pi$, so that there is a unique $k$-germ $\alpha=\alpha(v)$ with $\langle F(\alpha)\rangle=\delta(v)$, (where the notation $\langle\cdot\rangle$ is as in $\langle B_A\rangle$ in Section~\ref{s7}). We exemplify $\delta(v)$ for $k=2$ in Figure 2, with the Lexical Procedure (indicated by arrows ``$\Rightarrow$'') departing from $v=(00011)$ (top) and $v=(00101)$ (bottom), passing to sketches of $\Gamma$ (separated by symbols ``+''), one  sketch (in which to trace the edges of $D\subset\Gamma$ as in Theorem~\ref{kitr})  per representative $b_0b_1\cdots b_{n-1}=0b_1\cdots b_{n-1}$ of $v$ shown under the sketch (where $b_0=0$ is underscored) and pointing via an arrow ``$\rightarrow$'' to the corresponding color $c\in[k+1]$. Recall this $c$ is the number of horizontal arcs of $D$ below $\partial$.

In each of the 2 cases in Figure 2 (top, bottom), an arrow ``$\Rightarrow$'' to the right of the sketches points to a modification $\hat{v}$ of $b_0b_1\cdots b_{n-1}=0b_1\cdots b_{n-1}$ obtained by setting as a subindex of each 0 (resp. 1) its associated color $c$ (resp. an asterisk ``$*$'' ). Further to the right, a third arrow ``$\Rightarrow$'' points to the $n$-tuple $\delta(v)$ formed by the string of subindexes of entries of $\hat{v}$ in the order they appear from left to right.

\begin{theorem}\label{un} Let $\alpha(v^0)=a_{k-1}\cdots a_1=00\cdots 0$.
To each $\delta(v)$ corresponds a sole $k$-germ $\alpha=\alpha(v)$ with $\langle F(\alpha)\rangle=\delta(v)$ by means of the following ``Uncastling Procedure'':
Given $v\in L_k/\pi$, let $W^i=01\cdots i$ be the maximal initial numeric (i.e., colored) substring of $\delta(v)$, so the length of $W^i$ is $i+1$ ($0\le i\le k$).
If $i=k$, let $\alpha(v)=\alpha(v^0)$; else, set $m=0$ and:

{\bf 1.} set $\delta(v^m)=\langle W^i|X|Y|Z^i\rangle$, where $ Z^i$ is the terminal $j_m$-substring of $\delta(v^m)$, with $j_m=$

\hspace*{5mm} $i+1$, and let $X,Y$ (in that order) start at contiguous numbers $\Omega$ and $\Omega-1\ge i$;

{\bf 2.} set $\delta(v^{m+1})=\langle W^i|Y|X|Z^i\rangle$;

{\bf 3.} obtain $\alpha(v^{m+1})$ from $\alpha(v^m)$ by increasing its entry $a_{j_m}$ by 1;

{\bf 4.} if $\delta(v^{m+1})=[01\cdots k*\cdots*]$, then  stop; else, increase $m$ by 1 and go to step 1.\end{theorem}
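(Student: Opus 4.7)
The plan is to exhibit the Uncastling Procedure as the exact step-by-step inverse of the Castling Procedure of Theorem~\ref{thm2}, and then to conclude uniqueness of $\alpha(v)$ via injectivity of $F$ on $k$-germs. I would induct on the depth $d(\alpha)$ of the $k$-germ $\alpha=\alpha(v)$ in the tree $\mathcal{T}_k$ of Theorem~\ref{thm1}. The base case $d(\alpha)=0$, i.e.\ $\alpha=0^{k-1}$, is immediate: property (A) of Theorem~\ref{thm2} gives $\delta(v)=F(0^{k-1})=012\cdots k*\cdots*$, whose maximal initial numeric substring has length $k+1$, triggering the ``$i=k$'' branch with output $\alpha(v^0)=0^{k-1}$, as required.

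For the inductive step with $\alpha\ne 0^{k-1}$ and $\beta=\beta(\alpha)$ its $\mathcal{T}_k$-parent, Theorem~\ref{thm2}(B) presents $F(\alpha)=W^{i_C}|Y_C|X_C|Z^{i_C}$ as the Castling image of $F(\beta)=W^{i_C}|X_C|Y_C|Z^{i_C}$. I would verify that the decomposition of $\delta(v^0)=F(\alpha)$ dictated by Uncastling---namely $W^i$ as the maximal initial ascending numeric substring, $Z^i$ as the matching length-$(i+1)$ terminal asterisk block, and the inner split at the first appearance of $\Omega-1$ in $U$---recovers the above Castling decomposition exactly, under the renaming $X_U=Y_C$ and $Y_U=X_C$. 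Properties (a)--(c) of Theorem~\ref{thm2} force each ingredient: (c) identifies Castling $W^{i_C}$ with Uncastling $W^i$ (so $i_U+1=i_C=j_0$) and pins the asterisk tail $Z^{i_C}$; (b) ensures that the leftmost entry $\Omega$ of the inner block is a number strictly greater than $i_U$; and the first occurrence of $\Omega-1$ strictly inside $U$ uniquely delimits $X_U$ from $Y_U$, matching the Castling split point. The swap in step 2 then produces $\delta(v^1)=F(\beta)$, and the recorded index $j_0=i_C$ captures the unique coordinate at which $\alpha$ and $\beta$ differ (Theorem~\ref{thm1}). Applying the inductive hypothesis to $\delta(v^1)$, the remaining iterations reconstruct $\beta$ from $0^{k-1}$ by increments at $j_1,j_2,\ldots$; prepending the $j_0$-increment then rebuilds $\alpha$.

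Uniqueness of $\alpha(v)$ given $\delta(v)$ reduces to the injectivity of $F$ on $k$-germs, itself immediate from the observation that distinct germs yield distinct Castling descent paths from $0^{k-1}$, hence distinct $F$-images. Termination in exactly $d(\alpha)$ iterations is automatic from the strict depth-decrease at each step. The principal obstacle I anticipate is justifying that the inner split is genuinely forced---specifically, that $\Omega-1$ occurs and does so cleanly inside $U$, delimiting a Castling-consistent pair $(X_C,Y_C)$. This rests on the fact that Castling preserves properties (a)--(c) at every depth, so the blocks $X_C,Y_C$ themselves inherit the ``immediate right neighbor of any $b<k$ exceeds $b$'' rule; combined with the constraint $\Omega-1\ge i_U+1$ (which forbids collision with $W^i$), this pins the split point uniquely.
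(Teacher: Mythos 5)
Your proposal is correct and follows essentially the same route as the paper, whose entire proof of this theorem is the single assertion that Uncastling is the inverse of the Castling procedure of Section~\ref{s3}; your induction on depth in ${\mathcal T}_k$, the identification of the Uncastling split $W^i|X|Y|Z^i$ with the Castling split under the renaming $X_U=Y_C$, $Y_U=X_C$, and the appeal to injectivity of $F$ simply supply the details the paper leaves implicit. No discrepancy in approach, only in level of detail.
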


\begin{proof} This is a procedure inverse to that of Castling (Section~\ref{s3}), so 1-4 follow.\end{proof}

Theorem~\ref{un} allows to produce a finite sequence $\delta(v^0),\delta(v^1),\ldots,\delta(v^m),\ldots,$ $\delta(v^s)$ of $n$-strings  with $j_0\ge j_1\ge\cdots\ge j_m\cdots\ge j_{s-1}$ as in steps 1-4, and $k$-germs $\alpha(v^0),\alpha(v^1),\ldots,$ $\alpha(v^m),\ldots,$ $\alpha(v^s)$, taking from $\alpha(v^0)$ through the $k$-germs $\alpha(v^m)$, ($m=1,\ldots,s-1$), up to $\alpha(v)=\alpha(v^s)$ via unit incrementation of $a_{j_m}$, for $0\le m<s$, where each incrementation yields the corresponding $\alpha(v^{m+1})$. Recall $F$ is a bijection from the set $V({\mathcal T}_k)$ of $k$-germs onto $V(R_k)$, both sets being of cardinality $C_k$. Thus, to deal with $V(R_k)$ it is enough to deal with $V({\mathcal T}_k)$, a fact useful in interpreting Theorem~\ref{thm4} below.
For example $\delta(v^0)=\langle 04*3*2*1\,*\rangle=\langle 0|4*|3*2*1|*\rangle=\langle W^0|X|Y|Z^0\rangle$
with $m=0$ and $\alpha(v^0)=123$, continued in Table III with $\delta(v^1)=\langle W^0|Y|X|Z^0\rangle$, finally arriving to $\alpha(v^s)=\alpha(v^6)=000$.

\begin{center} TABLE III
$$\begin{array}{|c|ccccccc|c|c|}\hline
^{j_0=0}_{j_1=0}&^{\delta(v^1)}_{\delta(v^2)}&^{=}_{=}&^{\langle 0|3*2*1|4*|*\rangle}_{\langle 0|2*14*|3*|*\rangle}&^{=}_{=}&^{\langle 03*2*14**\rangle}_{\langle 02*14*3**\rangle}&^{=}_{=}&^{\langle 0|3*|2*14*|*\rangle}_{\langle 0|2*|14*3*|*\rangle}&^{\alpha(v^1)=122}_{\alpha(v^2)=121}&^{\langle F(122)\rangle=\delta(v^1)}_{\langle F(121)\rangle=\delta(v^2)}\\
^{j_2=0}_{j_3=1}&^{\delta(v^3)}_{\delta(v^4)}&^{=}_{=}&^{\langle 0|14*3*|2*|*\rangle}_{\langle 01|3*2|4*|**\rangle}&^{=}_{=}&^{\langle 014*3*2**\rangle}_{\langle 013*24***\rangle }&^{=}_{=}&^{\langle 01|4*|3*2|**\rangle}_{\langle 01|3*|24*|**\rangle}&^{\alpha(v^3)=120}_{\alpha(v^4)=110}&^{\langle F(120)\rangle=\delta(v^3)}_{\langle F(110)\rangle=\delta(v^4)}\\
^{j_4=1}_{j_5=2}&^{\delta(v^5)}_{\delta(v^6)}&^{=}_{=}&^{\langle 01|24*|3*|**\rangle}_{\langle 012|3|4*|***\rangle}&^{=}_{=}&^{\langle 0124*3***\rangle}_{\langle 01234****\rangle }&^{=}    &^{\langle 012\,|\,4\,*|3|**\rangle}&^{\alpha(v^5)=100}_{\alpha(v^6)=000}&^{\langle F(100)\rangle=\delta(v^5)}_{\langle F(000)\rangle=\delta(v^6)}\\\hline
\end{array}$$
\end{center}

A pair of skew edges $(B_A)\aleph_\pi((B_{A'}))$ and $(B_{A'})\aleph((B_A))$
in $M_k/\pi$, to be called a {\it skew reflection edge pair} ({\it SREP}), provides a color notation for any $v\in
L_{k+1}/\pi$ such that in each particular edge class mod $\pi$:

{\bf (I)} all edges receive a common color in $[k+1]$ regardless of the endpoint
on which the

\hspace*{6mm} Lexical Procedure (or
its modification immediately below) for $v\in L_{k+1}/\pi$ is applied;

{\bf (II)} the 2 edges in each SREP in $M_k/\pi$ are assigned a common color in $[k+1]$.

\noindent The modification in step (I) consists in replacing in Figure 2 each
$v$ by $\aleph_\pi(v)$ so that on the left we have instead now $(00111)$ (top) and $(01011)$ (bottom) with respective sketch subtitles

$$\begin{array}{ccc}
^{0011\underline{1}\rightarrow 0,}_{0101\underline{1}\rightarrow
0,}& ^{1001\underline{1}\rightarrow
1,}_{1010\underline{1}\rightarrow 2,}&
^{1100\underline{1}\rightarrow 2,}_{0110\underline{1}\rightarrow 1,}\\
\end{array}$$

\noindent resulting in similar sketches when the steps (1)-(5) of the
Lexical Procedure are taken with right-to-left reading and processing of the entries on the left side of the subtitles (before the arrows ``$\rightarrow$''), where the values of each $b_i$ must be taken complemented, (i.e., as $\bar{b_i}$).

Since an SREP in $M_k$ determines a unique edge $\epsilon$
of $R_k$ (and vice versa), the color received by the SREP can
be attributed to $\epsilon$, too. Clearly, each vertex of either
$M_k$ or $M_k/\pi$ or $R_k$ defines a bijection from its incident
edges onto the color set $[k+1]$. The edges obtained via $\aleph$ or $\aleph_\pi$
from these edges have the same corresponding colors.

\begin{theorem}\label{thm4}
A 1-factorization of $M_k/\pi$ by the colors
$0,1,\ldots,k$ is obtained via the Lexical Procedure that can be lifted to a covering 1-factorization of
$M_k$ and subsequently collapsed onto a folding 1-factorization of $R_k$. This insures the notation $\delta(v)$ for each $v\in V(R_k)$ so that there is a unique $k$-germ $\alpha=\alpha(v)$ with $\langle F(\alpha)\rangle=\delta(v)$.
\end{theorem}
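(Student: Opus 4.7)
The plan is to prove the four assertions in order: the Lexical Procedure gives a 1-factorization of $M_k/\pi$, this lifts through the covering $\rho:M_k\to M_k/\pi$, folds through $\aleph_\pi$ onto $R_k$, and finally defines $\delta(v)$ in a way that uniquely determines a $k$-germ $\alpha(v)$ via Uncastling.

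First I would verify that the Lexical Procedure of Theorem~\ref{kitr} assigns to the $k+1$ edges at each vertex of $M_k/\pi$ the colors $0,1,\ldots,k$ bijectively. At $v=(0b_1\cdots b_{n-1})\in L_k/\pi$, the $k+1$ neighbors in $L_{k+1}/\pi$ correspond to flipping the $k+1$ zero-positions $x\in S^c$ of the chosen representative, and the cardinality identity stated at the end of the proof of Theorem~\ref{kitr} (using $|[y,x)\cap S|$ vs.\ $|[y,x)\cap S^c|$) shows that as $x$ varies over $S^c$ the counts of horizontal arcs of $D$ above $\partial$ exhaust $[k+1]$ exactly once. The bijection at vertices of $L_{k+1}/\pi$ follows from property (I) via the reading-backward modification depicted in Figure~2 (with each $b_i$ replaced by $\bar b_i$). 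Together these yield the 1-factorization of $M_k/\pi$.

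Next I would lift and fold. The map $\rho:M_k\to M_k/\pi$ is a regular $\mathbb{Z}_n$-covering arising from the free action $\Upsilon'$ of display~(\ref{d3}); assigning to each of the $n$ preimages of every edge the color of its image produces a well-defined edge-coloring of $M_k$ that remains a 1-factorization because translations mod $n$ permute edges within each color class. For the descent to $R_k$: by Theorem~\ref{thm3}, $\aleph_\pi$ pairs skew edges into SREPs and keeps horizontal edges (of multiplicity at most $2$) fixed at each vertex pair; property (II) gives the two edges of each SREP a common color and property (I) handles horizontal edges, so the coloring descends to $R_k$ with each vertex meeting each color exactly once, a loop contributing both ends of its color to the incident vertex.

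Finally, for $v\in V(R_k)$ I would define $\delta(v)$ by the subscription procedure illustrated in Figure~2: write a representative $0b_1\cdots b_{n-1}$ of $v$, subscript each $0$ at position $i$ with the color assigned to the edge flipping $b_i$, and subscript each $1$ with an asterisk. The resulting $\delta(v)$ is an $n$-string composed of $0,1,\ldots,k$ (once each) and $k$ asterisks, matching exactly the format of images of $F$ in Theorem~\ref{thm2}. The Uncastling Procedure of Theorem~\ref{un} then produces a $k$-germ $\alpha(v)$ with $\langle F(\alpha(v))\rangle=\delta(v)$, and uniqueness is immediate from the bijectivity of $F$ on $k$-germs. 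The step I expect to be most delicate is the well-definedness of the Lexical coloring on $M_k/\pi$ itself: the color of an edge must not depend on the representative $0b_1\cdots b_{n-1}$ chosen at the start of the procedure, which is exactly the translation-invariance captured by the $[y,x)\cap S$ counting formula in the proof of Theorem~\ref{kitr}; once that invariance is in hand, properties (I) and (II) reduce the remaining lift-and-fold bookkeeping to a routine diagram chase along $\rho$ and $\aleph_\pi$.
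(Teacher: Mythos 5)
Your proposal is correct and follows essentially the same route as the paper: it rests on properties (I) and (II) of Section~\ref{s9} for the descent to $R_k$, on the covering $\rho$ arising from the free $\mathbb{Z}_n$-action for the lift to $M_k$, and on the subscription construction of $\delta(v)$ together with Uncastling (Theorem~\ref{un}) for the unique $k$-germ. The only difference is one of detail: you make explicit the well-definedness of the lexical coloring on $M_k/\pi$ via the $[y,x)$ counting formula, which the paper simply delegates to Theorem~\ref{kitr} and the surrounding discussion.
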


\begin{proof}
As pointed out in (II) above, each SREP in $M_k/\pi$ has its edges with a common color in $[k+1]$. Thus, the $[k+1]$-coloring of $M_k/\pi$ induces a well-defined $[k+1]$-coloring of $R_k$. This yields the claimed collapsing to a folding 1-factorization of $R_k$. The lifting to a covering
1-factorization in $M_k$ is immediate. The arguments above determine that the collapsing 1-factorization in $R_k$ induces the claimed $k$-germs $\alpha(v)$.
\end{proof}

\section{Union of Lexical 1-Factors of Colors 0 and 1} \label{2fact}

Given a $k$-germ $\alpha$, let $(\alpha)$ represent the dihedral class $\delta(v)=\langle F(\alpha)\rangle$ with $v\in L_k/\pi$.
Let $W_{01}^k$ be the 2-factor given by the union of the 1-factors of colors 0 and 1 in $M_k$ (namely those formed by lifting the edges $\alpha\alpha^0$ and $\alpha\alpha^1$ of $R_k$ in the notation of \ Section~\ref{s10}, instead of those of colors $k$ and $k-1$, as in \cite{gmn}). The cycles of $W_{01}^k$ are grown in this section from specific paths, as suggested in Figure 3 for $k=2,3,4$ (say: cycle $C_0$ that starts with vertically expressed path $X(0)$, for $k=2$; cycles $C_0,C_1$ that start with vertically expressed paths $X(0),X(1)$, for $k=3$; and cycles $C_0,C_1,C_2$ that start with vertically expressed paths $X(0),X(1),X(2)$, for $k=4$). Here, vertices $v\in L_k$ (resp. $v\in L_{k+1}$) will be represented with: 

{\bf(a)} 0- (resp. 1-) entries replaced by their respective colors $0,1,\ldots,k$ (resp. asterisks); 

{\bf(b)} 1- (resp. 0-) entries replaced by asterisks
(resp. their respective colors $0,1,\ldots,k$);

 {\bf(c)} delimiting chevron symbols "$>$" or ``$\rangle$'' (resp. "$<$" or ``$\langle$''), instead of parentheses or 

 \hspace*{6mm} 
 brackets, indicating the reading direction of {\it ``forward''} (resp. {\it ``backward''}) $n$-tuples. 

\begin{figure}[htp] 
\includegraphics[scale=0.74]{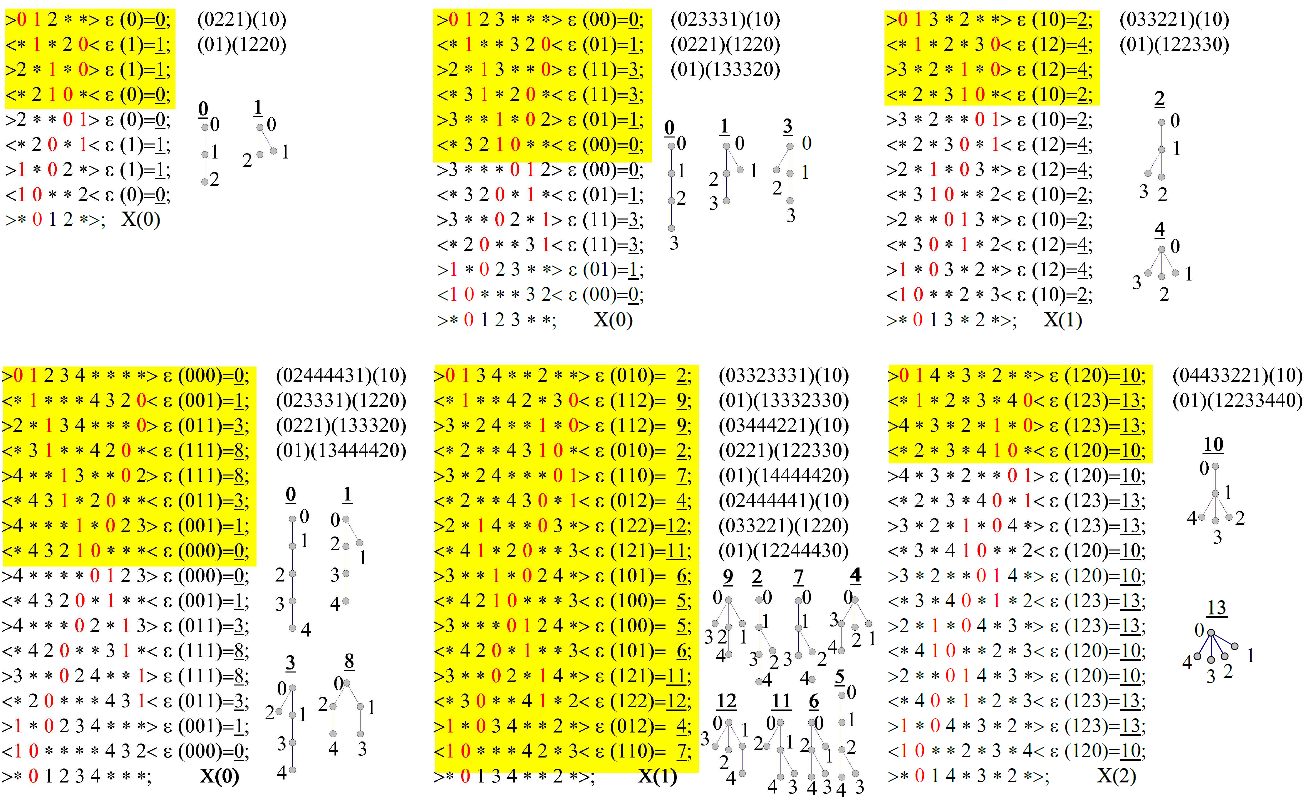}
\caption{Cycles of $W_{01}^k$ in $M_k$, ($k=2,3,4$)}
\end{figure}

\noindent Each such vertex $v$ is shown to belong (via the set membership symbol expressed by ``$\varepsilon$'' in Figure 3) to $(\alpha_v)$, where $\alpha_v$ is the $k$-germ of $v$, also expressed as its (underlined decimal) natural order. In each case, Figure 3 shows a vertically presented path $X(i)$ of
length $4k-1$ in the corresponding cycle $C_i$ starting at the vertex $w=b_0b_1\cdots b_{2k}=01\cdots *$ of smallest natural order and proceeding by traversing the edges colored 1 and 0, alternatively. The terminal vertex of such subpath is $b_{2k}b_0b_1\cdots b_{2k-1}=*01\cdots b_{2k-1}$, obtained by translation mod $n$ from $w$.  

\begin{obs}\label{ha}
The initial entries of the vertices in each $C_i$ are presented downward, first in the 0-column of $X(i)$, then in the $(2k-j)$-column of $X(i)$, ($j\in[2k]$, only up to $|C_i|$.
\end{obs}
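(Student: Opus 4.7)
The plan is to exploit the $\mathbb{Z}_n$-equivariance of the lexical 1-factorization together with the fact (asserted in the paragraph preceding the Observation) that the terminal vertex of $X(i)$ is $T(w)$, the cyclic translate of its initial vertex $w$ by one position.

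First, I would record that the translation action $\Upsilon'$ of $\mathbb{Z}_n$ on $V(M_k)$ from display~(\ref{d3}) permutes edges within each color class of the Lexical Procedure. This is implicit in Theorem~\ref{thm4} and in the $D_{2n}$-action of Theorem~\ref{th4}, because the lexical color at a vertex is computed from the cyclic sequence of 0's and 1's of its representative, and hence is invariant under cyclic shift. Consequently, the 2-factor $W_{01}^k$ is $\mathbb{Z}_n$-invariant, so $T$ permutes its cycles. Since $X(i)\subseteq C_i$ begins at $w$ and terminates at $T(w)$, the vertex $T(w)$ also lies on $C_i$; by connectedness of $C_i$, $T$ fixes $C_i$ setwise.

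Second, I would exploit equivariance step by step. Writing the vertices of $X(i)$ as $v_0=w,v_1,\ldots,v_{4k-1}$, the continuation of $C_i$ from $v_{4k-1}=T(w)$ by $4k-1$ further alternating color-1, color-0 edges is exactly $T(X(i))=T(v_0),T(v_1),\ldots,T(v_{4k-1})$, because translation commutes with both 1-factors. Iterating, the $(j+1)$-th block of $4k$ consecutive vertices in $C_i$ is $T^{j}(X(i))$, for $j=0,1,\ldots$, until the cycle closes at the smallest $s>0$ with $T^s(w)=w$.

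Third, I would read off the initial entries. If $v_m=b_0^{(m)}b_1^{(m)}\cdots b_{2k}^{(m)}$, then $T(v_m)=b_{2k}^{(m)}b_0^{(m)}\cdots b_{2k-1}^{(m)}$ has initial entry $b_{2k}^{(m)}$, and inductively $T^{j+1}(v_m)$ has initial entry $b_{2k-j}^{(m)}$. Thus the initial entries of the first $4k$ vertices of $C_i$ form the $0$-column of $X(i)$, and those of the $(j+1)$-th block of $4k$ vertices form the $(2k-j)$-column of $X(i)$, for $j\in[2k]$, as claimed. The qualifier ``only up to $|C_i|$'' handles the case $s<n$, in which some of the blocks $T^j(X(i))$ coincide with earlier ones and the enumeration is truncated.

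The main obstacle in the plan is not the column-reading itself, which is essentially a formal unwinding, but verifying that the alternating color-1, color-0 path of length $4k-1$ starting at the canonical vertex $w$ of smallest natural order really does reach $T(w)$. This fact, stated as part of the setup of $X(i)$ just above the Observation, must be grounded in the Lexical Procedure: under the color-1 and color-0 flips starting from $b_0b_1\cdots b_{2k}=01\cdots\ast$, the affected positions advance cyclically around the $n$-tuple so that after $4k-1$ alternating flips the configuration has been shifted by one position. Once this is granted, the rest of Observation~\ref{ha} follows by the equivariance argument above.
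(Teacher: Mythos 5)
Your proposal is correct and follows essentially the same route the paper takes: Observation~\ref{ha} receives no separate proof in the text and is justified by the preceding assertion that $X(i)$ terminates at the translate mod $n$ of its initial vertex $w$, combined with the $\mathbb{Z}_n$-equivariance of the lexical 1-factors of colors 0 and 1 — exactly the two ingredients you unwind to read the initial entries column by column. The one step you flag as needing grounding (that the alternating color-1/color-0 path of length $4k-1$ from $w$ really ends at $T(w)$) is likewise asserted without proof in the paper, so your treatment is, if anything, slightly more explicit than the original.
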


In Figure 3,  initial entries are red if they are in $\{0,1\}$ and
each cycle $C_i$ is encoded on its top right by a vertical sequence of expressions $(0\cdots 1)(1\cdots 0)$ that allows to get the sequence of initial entries of the succeeding vertices of $C_i$ by interspersing asterisks between each 2 terms inside parentheses (),
then removing those ().    
An ordered tree $T=T_v$ (Remark~\ref{re}) for each $v$ in the exemplified $C_i(=C$ in Proposition 2(v) \cite{gmn}) is shown at the lower right of its case in Figure 3. 
Each of these $T_v$ for a specific $C_i$ corresponds to the $k$-germ $\alpha_v$ (so we write $F(\alpha_v)=F(T_{\alpha_v})$) and is headed in the figure by its (underlined decimal) natural order. In the figure, vertices of each $T_v$ are denoted $i$, instead of $v_i$ ($i\in[k+1]$).
The trees corresponding to the $k$-germs in each case are obtained by applying {\it root rotation} \cite{gmn}, consisting in replacing the tree root by its leftmost child and redrawing the ordered tree accordingly.  
 A {\it plane tree} is an equivalence relation of ordered trees under root rotations. In the notation of Section~\ref{s10}, applying one root rotation has the same effect as traversing first an edge $\alpha\alpha_0$ in $C_i$ and then edge $\beta\beta_1$, also in $C_i$, where $\beta=\alpha_0$.  
 In each case, a yellow box shows a subpath of $X(i)$ with $\frac{1}{n}|V(C_i)|$ vertices of $C_i$ that takes into account the rotation symmetry of the associated plane tree ${\mathcal T}_i$.
In Figure 3 for $k=4$, successive application of root rotations on the second cycle, $C_1$, produces the cycle $(\underline{9},\underline{2},\underline{4},\underline{11},\underline{5},\underline{6},\underline{12},\underline{7})$, the square graph of $C_1$,  that starts downward from the second row or upward from the third row, thus covering respectively the vertices of $L_4$ or $L_5$ in the class.
 Let $D_i$ ($i\ge  0$) be the set of substrings of length $2i$ in an $F(\alpha)$ with exactly $i$ color-entries such that in every prefix (i.e. initial substring), the number of asterisk-entries is at least as large as the number of color-entries. The elements of $D=\cup_{i\ge 0}D_i$ are known as {\it Dyck words}. 
Each $F(\alpha)$ is of the form $0v1u*$, where $u$ and $v$ are Dyck words and 0 and 1 are colors in $[k+1]$ \cite{gmn}. 
The ``forward'' $n$-tuple  $F(\alpha)$  in $(\underline{9},\underline{2},\underline{4},\underline{11},\underline{5},\underline{6},\underline{12},\underline{7})$ can be written with parentheses enclosing such Dyck words $v$ and $u$, namely  $0()1(34**2*)*$, for $\underline{2}$; $0(3*24**)1()*$, for $\underline{9}$; $0()1(3*24**)*$, for $\underline{7}$; $0(3*2*)1(4*)*$, for $\underline{12}$; $0(24*3**)1()*$, for $\underline{6}$; $0()1(24*3**)*$, for $\underline{5}$; $0(2*)1(4*3*)*$, for $\underline{11}$; and $0(34**2*)1()*$, for $\underline{4}$. Similar treatment holds for ``backward'' $n$-tuples.
As in Figure 3, each pair $(0\cdots 1)(1\cdots 0)$ represents 2 paths in the corresponding cycle,  of lengths $2|(0\cdots 1)|-1$ and $2|(1\cdots 0)|$,\ adding  up to $4k+2$. If these 2 paths are of the form $0v1u*$ and $0v'1u'*$ (this one read in reverse), then $|u|+2=|(0\cdots 1)|$ and $|u'|+2=|(1\cdots 0)|$. 
%(In the representations of Section~\ref{alt}, the first path here moves downward and the second path moves upward). 
Reading these paths starts at a 0-entry and ends at a 1-entry. In reality, the collections of paths obtained from the 1-factors here have the leftmost entry of the $n$-tuples representing their vertices constantly equal to $1\in\mathbb{Z}_2$ before taking into consideration (items (a) and (b) above, but with the reading orientations given in item (c). 

The 1-factor of color 0 makes the endvertices of each of its edges to have their representative plane trees obtained from each other by {\it horizontal reflection} $\Phi=F\alpha^0F^{-1}$. For example, Figure 3 shows that for $k=2$: both $\underline{0},\underline{1}$ in $X(0)$ are fixed via $\Phi$; for $k=3$: $\underline{0}$ in $X(0)$ is fixed via $\Phi$ and $\underline{1},\underline{3}$ in $X(0)$ correspond to each other via $\Phi$; and $\underline{2},\underline{4}$ in $X(1)$ are fixed via $\Phi$; for $k=4$: $\underline{0},\underline{8}$ in $X(0)$ are fixed via $\Phi$ and $\underline{1},\underline{3}$ in $X(0)$ correspond to each other via $\Phi$; $\underline{5},\underline{9}$ in $X(1)$ are fixed via $\Phi$ and the pairs $(\underline{5},\underline{9})$, $(\underline{2},\underline{7})$, $(\underline{4},\underline{12})$ and $(\underline{6},\underline{11})$ in $X(1)$ are pairs of correspondent plane trees via $\Phi$; and $\underline{10},\underline{13}$ in $X(2)$ are fixed via $\Phi$. This horizontal reflection symmetry arises from Theorem~\ref{thm3}. It accounts for each pair of contiguous rows in any $X(i)$ corresponding to a 0-colored edge.                                      
For $k=5$, this symmetry via $\Phi$ occurs in all cycles $C_i$ ($i\in[6]$). But we also have $F(\underline{22})=\rangle 024**135***\rangle$, where $\underline{22}=(1111)$ in $C_0$ and $F(\underline{39})=\rangle 03*2*15*4**\rangle$, where $\underline{39}=(1232)$ in $C_3$, both having their 1-colored edges leading to reversed reading between $L_5$ and $L_6$, again by Theorem~\ref{thm3}. Moreover, $F((11\cdots 1))$ has a similar property only if $k$ is odd; but if $k$ is even, a 0-colored edge takes place, instead of the 1-colored edge for $k$ odd.
These cases reflect the following lemma (which can alternatively be implied from Theorem~\ref{nt} (B)-(C)) via the correspondence $i\leftrightarrow k-i$, ($i\in[k+1]$).

\begin{obs}\label{lem} {\bf(A)} Every $0$-colored edge is represented via $\Phi=F\alpha^0F^{-1}$. {\bf(B)} Every 1-colored edge is represented via the composition $\Psi$ of $\Phi$ (first) and root rotation (second).\end{obs}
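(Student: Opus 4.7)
The plan is to prove (A) by an explicit computation tracing the color-$0$ edge through the Lexical Procedure and the folding $M_k/\pi \to R_k$, and then to derive (B) from (A) by invoking the identity on $W^k_{01}$ that color-$0$ followed by color-$1$ realizes one root rotation.

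For (A), I would first observe that $\theta(\alpha) = 0 \cdot d(\alpha)$ for every $k$-germ $\alpha$, where $d(\alpha)$ is the classical $2k$-length Dyck word of $T_\alpha$ obtained by left-to-right DFS assigning $0$ to each descending edge and $1$ to each ascending edge. This identity is immediate from Remark~\ref{re}: the initial numeric $0$ of $F(\alpha)$ encodes the visit to the root $v_0$, and replacing the remaining numerics and asterisks respectively by $0$ and $1$ yields precisely $d(\alpha)$. Since $d(\alpha)$ is a Dyck word weakly below $\partial$, the cyclic representative $\theta(\alpha)$ itself satisfies the color-$0$ condition of Theorem~\ref{kitr} (no horizontal arcs of $D$ above $\partial$), and the cycle lemma for cyclic binary sequences with $k+1$ zeros and $k$ ones guarantees this is the unique such representative in its $\pi$-class.

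Next I would compute the color-$0$ neighbor in $R_k$. The color-$0$ edge of $M_k/\pi$ from $\theta(\alpha)$ leads to $1 \cdot d(\alpha) \in L_{k+1}/\pi$ obtained by flipping $b_0$, so by the definition of $R_k$ as the $\aleph_\pi$-folding (Section~\ref{s7}) the $R_k$-neighbor of $(\alpha)$ is $\aleph_\pi^{-1}((1 \cdot d(\alpha)))$. By~(\ref{d2}) one has $\aleph(1 \cdot d(\alpha)) = \overline{d(\alpha)}^{\mathrm{rev}} \cdot 0$, an element of the $L_k/\pi$-class containing also $0 \cdot \overline{d(\alpha)}^{\mathrm{rev}}$ via cyclic rotation by one position. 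The last step is to identify $\overline{d(\alpha)}^{\mathrm{rev}}$ as the Dyck word of the horizontally reflected tree $\Phi(T_\alpha)$: inducting on the decomposition $d(\alpha) = (0\, d_1\, 1)(0\, d_2\, 1) \cdots (0\, d_s\, 1)$ into the children's contributions yields $\overline{d(\alpha)}^{\mathrm{rev}} = (0\, \overline{d_s}^{\mathrm{rev}} 1)(0\, \overline{d_{s-1}}^{\mathrm{rev}} 1) \cdots (0\, \overline{d_1}^{\mathrm{rev}} 1) = d(\Phi(T_\alpha))$ by the induction hypothesis applied to each subtree. Hence the color-$0$ neighbor of $(\alpha)$ is the germ of $\Phi(T_\alpha)$, i.e., $F \alpha^0 F^{-1} = \Phi$, proving (A).

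For (B), the preceding analysis of $W^k_{01}$ shows that on every cycle $C_i$ the composition ``color-$0$ then color-$1$'' agrees with one root rotation of the underlying plane tree (the square graph of each $C_i$ is the rotation orbit of its plane tree). This is the identity $F (\alpha^0)^1 F^{-1} = \mathrm{RootRot}$ on trees; composing on the right with the involution $\Phi = F \alpha^0 F^{-1}$ yields $F \alpha^1 F^{-1} = \mathrm{RootRot} \circ \Phi = \Psi$, which is (B). The main obstacle will be the verification that the canonical $\theta(\alpha)$ is the color-$0$ representative and that it is unique within its $\pi$-class: this requires a careful parse of the ``horizontal arcs above $\partial$'' count in Theorem~\ref{kitr} to confirm the equivalence with the weakly-subdiagonal Dyck condition, together with an application of the cycle lemma to rule out all other cyclic representatives with leading $0$.
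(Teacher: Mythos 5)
Your part (A) is a genuine proof of something the paper only asserts: Observation~\ref{lem} is stated without a proof environment, supported by the worked examples for $k\le 5$, the remark that ``this horizontal reflection symmetry arises from Theorem~\ref{thm3}'', and the alternative derivation from Theorem~\ref{nt}(B)--(C) via $i\leftrightarrow k-i$. Your chain --- $\theta(\alpha)=0\cdot d(\alpha)$ with $d(\alpha)$ the DFS Dyck word, the cycle lemma identifying this as the unique rotation whose lattice path stays on one side of $\partial$ (hence the color-$0$ representative), the computation $\aleph(1\cdot d(\alpha))=\overline{d(\alpha)}^{\mathrm{rev}}\cdot 0$, and the induction on children showing $\overline{d(\alpha)}^{\mathrm{rev}}=d(\Phi(T_\alpha))$ --- is correct and is exactly the concrete content behind the paper's appeal to the complemented-reversal symmetry of Theorem~\ref{thm3}. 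Two small caveats: the paper states the color as the number of horizontal arcs \emph{above} $\partial$ in Theorem~\ref{kitr} but \emph{below} $\partial$ in Section~\ref{s9}, so you must fix one convention and check it against Table~IV (e.g.\ $\alpha=01$, $\alpha^0=11$ for $k=3$ confirms your reading); and your final identification of the neighbor's $k$-germ tacitly uses that the germ of a vertex of $R_k$ is read off from its subdiagonal representative $0\cdot d(\beta)$ via $\theta$, i.e.\ the compatibility $\delta(\theta(\beta))=\langle F(\beta)\rangle$ from Section~\ref{s9}/Theorem~\ref{thm4}, which is worth saying explicitly.

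Part (B) has a genuine gap: your only input is the identity ``traversing a color-$0$ edge and then a color-$1$ edge effects one root rotation,'' which you take from the preceding discussion of $W_{01}^k$. But in the paper that identity is itself an unproved assertion in the paragraph before Observation~\ref{lem}, and given (A) it is \emph{logically equivalent} to (B): $F(\cdot)^1F^{-1}\circ\Phi=\mathrm{RootRot}$ iff $F(\cdot)^1F^{-1}=\mathrm{RootRot}\circ\Phi=\Psi$. So as written your (B) is circular. To close it you need an independent computation parallel to (A): identify the color-$1$ representative of $v$ as the unique cyclic rotation of $0\cdot d(\alpha)$ whose lattice path has exactly one horizontal arc on the far side of $\partial$ (again by a cycle-lemma count), flip its leading bit, apply $\aleph_\pi^{-1}$, and verify that the resulting subdiagonal word is $d(\Psi(T_\alpha))$. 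Alternatively, follow the paper's other suggested route and deduce (B) from Theorem~\ref{nt}(C) under $i\leftrightarrow k-i$ --- but that too would need the correspondence of Section~\ref{rootk} made precise rather than cited.
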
 

By  Theorem~\ref{thm3}(ii), the number $\xi$ of contiguous pairs of vertices of $M_k$ in each $C_i$ with a common $k$-germ happens in pairs. The first cases for which this $\xi$  is null happens for $k=6$, namely for the 2 reflection pairs $(\rangle 012356**4****\rangle,\rangle 01235*46*****\rangle)$ and $(\rangle 01246*5**3***\rangle,\rangle 0124*36*5****\rangle)$ whose respective ordered trees are enantiomorphic, i.e they are reflection via $\Phi$ of each other. We say that these 2 cases are {\it enantiomorphic}. In fact, the presence of a pair of enantiomorphic ordered trees in a case of an $X(i)$ will be distinguished by saying that the case is {\it enantiomorphic}. For example, $k=4$ offers $(\underline{1},\underline{3})$ as the sole enantiomorphic pair in $X(0)$, and $(\underline{2},\underline{7})$, $(\underline{4},\underline{12})$, $(\underline{11},\underline{6})$ as all the enantiomorphic pairs in $X(1)$. 
Each enantiomorphic cycle $C_i$ or each cycle $C_i$ with $\xi=2$ has $|C_i|=2k(4k+2)$. If $\xi=2\zeta$ with $\zeta>1$, then $|C_i|=\frac{2k(4k+1)}{\zeta}$.
On account of these facts, we have the following:

\begin{obs}\label{thm12} For each integer $k>1$, there is a natural bijection $\Lambda$ from the $k$-edge plane trees onto the cycles of $W_{01}^k$, as well as a partition ${\mathcal P}_k$ of the $k$-germs (or the ordered trees they represent via $F$), with each class of ${\mathcal P}_k$ in natural correspondence either to a $k$-edge plane tree or to a pair of enantiomorphic $k$-edge plane trees disconnecting in $W_{01}^k$ the forward (in $L_k$) and reversed (in $L_{k+1}$) readings of each vertex $v$ of their associated cycles via $\Lambda$. 
\end{obs}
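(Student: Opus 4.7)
The key input is Observation~\ref{lem}: every $0$-colored edge of $W_{01}^k$ realizes the horizontal reflection $\Phi$ on the underlying ordered tree, while every $1$-colored edge realizes $\Psi=(\text{root rotation})\circ\Phi$. Because $\Phi$ is an involution (the $0$-color adjacency is symmetric and $\Phi=F\alpha^0 F^{-1}$), traversing a $0$-edge followed by a $1$-edge from any $v\in L_k$ composes to a single root rotation applied to the ordered tree of $v$. The plan is to package this two-step identity into $\Lambda$ and then read off ${\mathcal P}_k$ from it.

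I would first prove that the $L_k$-vertices of each cycle $C$ of $W_{01}^k$ comprise exactly the ordered trees of one root-rotation orbit, i.e., of one plane tree $P$. Since $C$ alternates between $L_k$ and $L_{k+1}$ and between colors $0$ and $1$, iterating the two-step identity around $C$ traces the full forward orbit of the starting ordered tree under root rotation, and the orbit closes precisely when the cycle does. This lets me set $\Lambda(P):=C$; well-definedness is the above statement, surjectivity holds because every cycle meets $L_k$, and injectivity is forced because distinct root-rotation orbits yield disjoint $L_k$-vertex sets.

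Next I would define ${\mathcal P}_k$ by declaring two $k$-germs equivalent iff some $M_k$-vertices representing them lie on a common cycle of $W_{01}^k$. By Observation~\ref{lem}(A), the $L_{k+1}$-vertices of $C=\Lambda(P)$ are precisely the $\Phi$-images of its $L_k$-vertices, and hence, when read backward, they represent exactly the ordered trees of the enantiomorphic plane tree $\Phi(P)$. Consequently the class of ${\mathcal P}_k$ attached to $C$ is a single plane tree in the self-enantiomorphic case $\Phi(P)=P$ and the disjoint union $P\cup\Phi(P)$ of an enantiomorphic pair otherwise, in perfect accord with the two sides of the disjunction in the statement.

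The main obstacle will be making the clause \emph{``disconnecting in $W_{01}^k$ the forward and reversed readings''} rigorous. Using Theorem~\ref{thm3}(ii) together with the counting parameter $\xi$ introduced just before the statement, one must verify that the forward $L_k$-reading and the reversed $L_{k+1}$-reading of the vertices of $C$ genuinely split each class of ${\mathcal P}_k$ into its two enantiomorphic halves, or coincide in the self-enantiomorphic case, with no spurious identifications between the two halves or between different cycles. In effect, this last check certifies that ${\mathcal P}_k$ is the partition into orbits of the group generated by root rotation and $\Phi$ acting on the set of $k$-edge ordered trees, thereby matching $\Lambda$ cycle-by-cycle.
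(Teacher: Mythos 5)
Your proposal reconstructs exactly the argument the paper intends: the identity ``$0$-edge followed by $1$-edge equals one root rotation'' drawn from Observation~\ref{lem} yields the bijection $\Lambda$, and the $\Phi$-relation between forward ($L_k$) and reversed ($L_{k+1}$) readings, combined with Theorem~\ref{thm3}(ii) and the parameter $\xi$, yields the partition ${\mathcal P}_k$ into orbits under root rotation and reflection. The paper offers no separate proof of this Observation---it is stated as following ``on account of these facts'' from precisely the preceding material you invoke---so your route is essentially the same as the paper's.
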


\section{Reinterpretation of the Middle-Levels Theorem}\label{ultimo}

Fo each cycle $C_i$ of $W_{01}^k$, the ordered trees of its plane tree ${\mathcal T}_i$ with leftmost subpath of length 1 from $v_0$ to a vertex $v_h$ determine 6-cycles touching $C_i$ in 2 nonadjacent edges as follows: 
Let $t_i<k$ be the number of degree 1 vertices of ${\mathcal T}_i$. Let $\tau_i$ be the number of rotation symmetries of ${\mathcal T}_i$. Then, there are $\frac{t_i}{\tau_i}$
 classes mod $n$ of pairs of vertices $u,v$ at distance 5 in $C_i$ with $u\in L_{k+1}$ ahead of $v\in L_k$ in $X(i)$ and associated color $h\in\{2,\ldots,k\}$ such that: 
{\bf(i)} $u$ and $v$  are adjacent via $h$; 
{\bf(ii)} $u$ (resp. $v$) has cyclic backward (resp. forward) reading $\langle\cdot\cdots * h 0 * \cdots\langle$ (resp. $\rangle\cdots * 0 h *\cdots\cdot\rangle$);
{\bf(iii)} the column in which the occurrences of $h$ in (ii) happen at distance 5 looks between 
$u$ and $v$ (both included) as the transpose of $(h,*,0,0,*,h)$. Recall there are $n$ such columns. 
In each case, the vertices $u'$ and $v'$ in $C_i$ preceding respectively $u$ and $v$ in $X_i$ are endvertices of a 3-path $u'u''v''v'$ in $M_k$ with the edge $u''v''$ in a cycle $C_j\ne C_i$ in $W_{01}^k$. 
The 6-cycle $U_i^j=(uu'u''v''v'v)$ has as symmetric difference $U_i^j\Delta(C_i\cup C_j)$ a cycle in $M_k$ whose vertex set is $V(C_i\cup C_j)$.
 With $u',u'',v'',v',v,u,u'$ shown vertically, Figure 4 illustrates $U_i^j$, twice each for $k=3,4$.  
That symmetric difference replaces respectively the edges $u''v''$, $v'v$, $uu'$ in $C_i\cup C_j$ by $u'u''$, $v''v''$, $vu$. In the figure, vertically contiguous positions holding a common number $g$ (meaning adjacency via color $g$) are presented in red if $g\in\{0,1\}$, e.g. $u''v''$ with $g=1$, in column say $r_1$ exactly at the position where $u$ and $v$ differ, (however having common color $h$ as in (i) above) and in orange, otherwise. The column $r_2$ (resp $r_3$) in each instance of Figure 3 containing color 1 in $u'$ (resp. 0 in $v'$) and a color $c\in\{2,\ldots,k\}$ in $v'$ (resp. color $d\in\{2,\ldots,k\}$ in $u'$) starts with $1,*,c,c$, (resp. $d,d,*,0$), where $c,d\in\{2,\ldots,k\}$. Then, $r_1,r_2,r_3$ are the only columns having changes in the binary version of $U_i^j$. All other columns have their first 4 entries alternating asterisks and colors. In the first disposition in item (ii) above, we have that: {\bf(ii')} $u''$ (resp. $v''$) has cyclic backward (resp. forward) reading $\langle\cdot\cdots d 1 0 * \cdots\langle$ (resp. $\rangle\cdot\cdots * 1 * 0\cdots\rangle$). 

In the previous paragraph, ``ahead'' can be replaced by ``behind'', yielding additional 6-cycles $U_i^j$ by modifying adequately the accompanying text. 

\begin{figure}[htp]\label{fig3}
\includegraphics[scale=0.282]{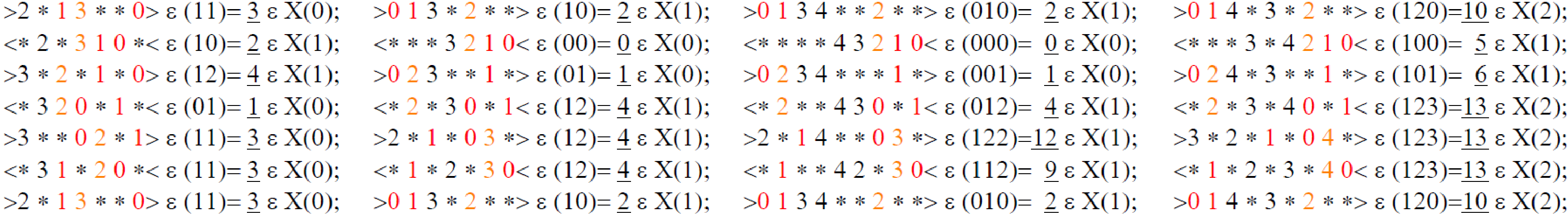}
\vspace*{-5mm}
\caption{Examples of 6-cycles $U_i^j$ for $k=4,5$}
\end{figure}
 
\begin{theorem}\label{thm23} \cite{M,gmn}
Let $0<k\in\mathbb{Z}$. A Hamilton cycle in $M_k$ is obtained by means of the symmetric differences of $W_{01}^k$ with the members of a set of pairwise edge-disjoint 6-cycles $U_i^j$.  
\end{theorem}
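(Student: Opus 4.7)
The plan is to start from the 2-factor $W_{01}^k$ of $M_k$, whose cycles $C_0,C_1,\ldots$ are in bijection with the $k$-edge plane trees by Observation~\ref{thm12}, and successively merge them into a single Hamilton cycle via symmetric differences with a pairwise edge-disjoint family of 6-cycles $U_i^j$. The reinterpretation therefore replaces the combinatorial bookkeeping of \cite{gmn} by the RGS-and-ordered-tree framework of Sections~\ref{srgg}--\ref{2fact}, and reduces the theorem to two tasks: a local verification that each $U_i^j$ really is a mergeable 6-cycle, and a global choice of such 6-cycles whose union is edge-disjoint and spans the cycle-merge structure.

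The first task is to verify that every configuration $(uu'u''v''v'v)$ described before the statement is genuinely a 6-cycle in $M_k$ and that its symmetric difference with $C_i\cup C_j$ is a single cycle on $V(C_i)\cup V(C_j)$. The 6-cycle check reduces to a bit-by-bit inspection of the three distinguished columns $r_1,r_2,r_3$: conditions (i)--(iii) together with (ii$'$) force the pattern $(*,1,1,*)$ in $r_1$ and exactly one $0/1$ flip each in $r_2,r_3$, while all other columns remain constant on the four middle vertices, so the six edges $uu'$, $u'u''$, $u''v''$, $v''v'$, $v'v$, $vu$ exist and carry the lexical colors predicted by Theorem~\ref{kitr}. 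Given such a $U_i^j$ meeting $C_i\cup C_j$ in three pairwise-nonadjacent edges (two of color $0$ or $1$ in $C_i$, one of color $1$ in $C_j$), the standard 2-factor splicing argument shows that $(C_i\cup C_j)\,\triangle\,U_i^j$ is a single cycle covering $V(C_i)\cup V(C_j)$.

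For the global step I would introduce a cycle-merge multigraph $\mathcal{M}_k$ whose vertices are the cycles of $W_{01}^k$ and whose edges are the valid 6-cycles $U_i^j$, and then exhibit a spanning subtree $\mathcal{T}$ of $\mathcal{M}_k$ whose 6-cycles are pairwise edge-disjoint in $M_k$; the symmetric difference of $W_{01}^k$ with the union of the edges of $\mathcal{T}$ will then be the desired Hamilton cycle. The main obstacle is precisely producing such a $\mathcal{T}$: one must show both that $\mathcal{M}_k$ is connected and that the 6-cycles can be selected without edge collisions. I would attack this by induction on the natural enumeration of $k$-germs of Section~\ref{srgg}, using the $t_i/\tau_i$ classes mod $n$ of leftmost length-1 subpaths in each plane tree $\mathcal{T}_i$ (counted in the paragraph preceding the statement) to connect every $C_i$ with $i>0$ to some $C_j$ whose plane tree precedes $\mathcal{T}_i$ in the RGS order. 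The multiplicity $t_i/\tau_i$, combined with the horizontal reflection symmetry $\Phi$ of Observation~\ref{lem} and with the rotation symmetries counted by $\tau_i$, supplies enough freedom to reroute, at each inductive step, through a still-unused 6-cycle, so the edge-disjointness of $\mathcal{T}$ can be enforced greedily as the induction proceeds.
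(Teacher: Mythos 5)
Your proposal follows essentially the same route as the paper: both verify locally that each $U_i^j$ is a 6-cycle meeting $C_i\cup C_j$ in three pairwise nonadjacent edges so that the symmetric difference splices the two cycles into one, and both then form an auxiliary graph on the cycles of $W_{01}^k$ with the 6-cycles as (multi)edges, extract a spanning tree, arrange edge-disjointness of the selected 6-cycles (the paper via an outdegree-1 selection in its digraph $\mathcal D$, you via a greedy induction on the natural $k$-germ enumeration exploiting the $2n\,t_i/\tau_i$ choices per cycle), and take the successive symmetric differences. The remaining unproved points in your sketch (connectivity of the cycle-merge graph and the feasibility of a collision-free selection) are left at essentially the same level of detail in the paper's own proof.
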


\begin{proof}  Clearly, the statement holds for $k=1$. Assume $k>1$. 
Let $\mathcal D$ be the digraph whose vertices are the cycles $C_i$ of $W_{01}^k$, with an arc from $C_i$ to $C_j$, for each 6-cycle $U_i^j$, where $C_i,C_j\in V(\mathcal D)$ with $i\ne j$. Since $M_k$ is connected, then $\mathcal D$ is connected. 
Moreover, the outdegree and indegree of every $C_i$ in $\mathcal D$ is $2n\frac{t_i}{\tau_i}$ (see items (ii) and (ii') above), in proportion with the length of $C_i$, a $\frac{1}{n}$-th of which is illustrated in each yellow box of Figure 3. 
Consider a spanning tree ${\mathcal D}'$ of $\mathcal D$. 
Since all vertices of $\mathcal D$ have outdegree $>0$, there is a ${\mathcal D}'$ in which the outdegree of each vertex is 1. This way, we avoid any pair of 6-cycles $U_i^j$ with common $C_i$ in which the associated distance-6 subpaths from $u'$ to $v$ in $C_i$ do not have edges in common.   
For each $a\in A({\mathcal D}')$, let $\nabla(a)$ be its associated 6-cycle $U_i^j$. Then, $\{\nabla(a);a\in A({\mathcal D}')\}$ can be selected as a collection of edge-disjoint 6-cycles. By performing all symmetric differences $\nabla(a)\Delta(C_i\cup C_j)$ corresponding to these 6-cycles, a Hamilton cycle is obtained.
\end{proof}

\section{Alternate Viewpoint on Ordered Trees}\label{rootk}

To have the viewpoint of \cite{gmn}, replace $v_0$ by $v_k$ as root of the ordered trees. We start with examples.
Figure 5 shows on its left-hand side the 14 ordered trees for $k=4$ encoded at the bottom of Table I. Each such tree $T=T(\alpha)$ is headed on top by its $k$-germ $\alpha$, in which the entry $i$ producing $T$ via Castling is in red. Such $T$ has its vertices denoted on their left and its edges denoted on their right, with their notation $v_i$ and $e_j$ given in  Section~\ref{2fact}. Castling here is indicated in any particular tree $T=T(\alpha)\ne T(00\cdots 0)$ by distinguishing in red the largest subtree common with that of the parent tree of $T$ (as in Theorem~\ref{thm1}) whose Castling reattachment produces $T$. This subtree corresponds with substring $X$ in Theorem~\ref{thm2}. In each case of such parent tree, the vertex in which the corresponding tree-surgery transformation leads to such a child tree $T$ is additionally labeled (on its right) with the expression  of its $k$-germ, in which the entry to be modified in the case is set in red color.

On the other hand, the 14 trees in the right-hand side of Figure 5 have their labels set by making the root to be $v_k$ (instead of $v_0$), then going downward to $v_0$ (instead of $v_k$) while gradually increasing  (instead of decreasing) a unit in the subindex $j$ of the denomination $v_j$, sibling by sibling from left to right at each level. The associated $k$-germ headers on this right-hand side of the figure correspond to the new root viewpoint. This determines a bijection $\Theta$ established by correspondence between the old and the new header $k$-germs. In our example, it yields an involution formed by the pairs $(001,100)$, $(011,110)$, $(120,012)$ and $(112,121)$, with fixed 000, 010, 101, 111, 122 and 123. 

\begin{figure}[htp]
\includegraphics[scale=0.278]{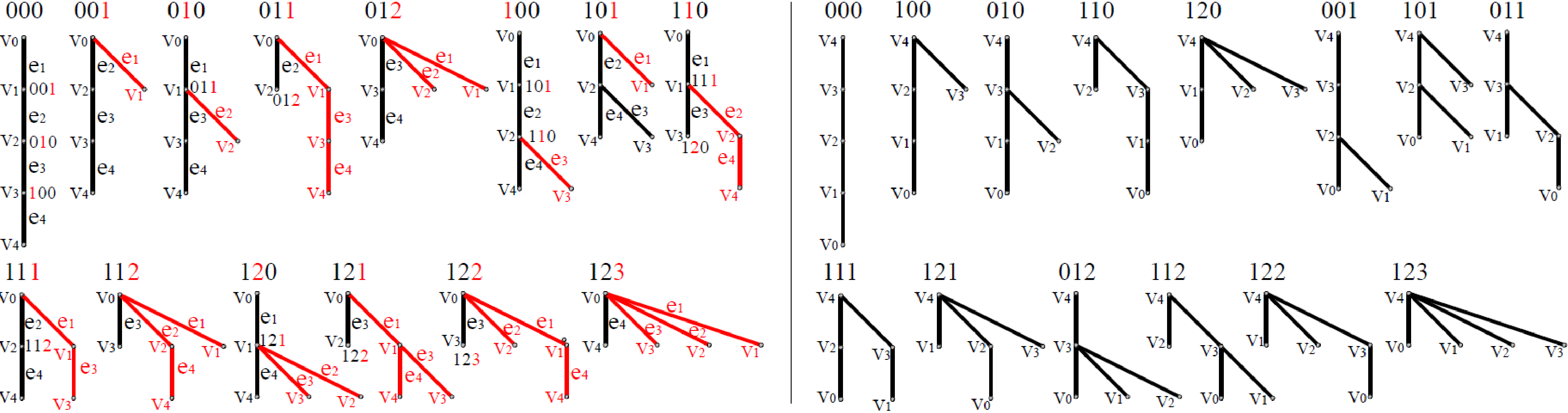}
\vspace*{-5mm}
\caption{Generation of ordered trees for $k=4$}
\end{figure}

The function $\Theta$ seen from the $k$-germ viewpoint, namely as the composition function $F^{-1}\Theta F$, behaves as follows. Let $\alpha=a_{k-1}a_{k-2}\cdots a_2a_1$ be a $k$-germ and let $a_i$ be the rightmost occurrence of its largest integer value. A substring $\beta$ of $\alpha$ is said to be an {\it atom} if it is either formed by a sole 0 or is a maximal strictly increasing substring of $\alpha$ not starting with 0. For example, consider the 17-germ $\alpha'=0123223442310121$. By enclosing the successive atoms between parentheses, $\alpha'$ can be written as $\alpha'=(0)123(2)(234)4(23)(1)(0)(12)(1)$, obtained by inserting in a {\it base string} $\gamma'=1\cdots a_i=1234$ all those atoms according to their order, where $\gamma'$ appears partitioned into subsequent un-parenthesized atoms distributed and interspered from left to right in $\alpha'$ just once each as further to the right as possible. 
This atom-parenthesizing procedure works for every $k$-germ $\alpha$ and determines a corresponding base string $\gamma$, like the $\gamma'$ in our example. 

\begin{theorem}\label{pro}
Given a $k$-germ $\alpha=a_{k-1}a_{k-2}\cdots a_2a_1$, let $a_i$ be the leftmost occurrence of its largest integer value. Then, $\alpha$ is obtained from a base string $\gamma=1\cdots a_i$ by inserting in $\gamma$ all atoms of $\alpha\setminus \gamma$ in their left-to-right order. Moreover, $F^{-1}\Theta F(\alpha)$ is obtained by reversing the insertion of those atoms in $\gamma$, in right-to-left fashion. For both insertions, $\gamma$ is partitioned into subsequent atoms distributed in $\alpha$ and $F^{-1}\Theta F(\alpha)$ as further to the right as possible. 
\end{theorem}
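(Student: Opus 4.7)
The plan is to split the statement into three sub-claims and attack them in order: first establish the existence and uniqueness of the canonical decomposition of $\alpha$, next translate it into tree language via the bijection $F$, and finally check that the involution $\Theta$ acts on this decomposition by reversing the insertion order.

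First I would prove, by a direct combinatorial argument, the existence and uniqueness of a decomposition $\alpha = B_0\,\gamma_1\,B_1\,\gamma_2\cdots\gamma_\ell\,B_\ell$, where $\gamma_1\cdots\gamma_\ell$ is the partition of the base string $\gamma=1\cdots a_i$ into atoms, each $B_j$ is a (possibly empty) left-to-right sequence of non-$\gamma$ atoms of $\alpha$, and the placement of the $\gamma_j$'s is pushed as far to the right in $\alpha$ as the germ inequalities $a_{i-1}\le a_i+1$ permit. This is essentially a lemma about the greedy maximal atom parsing of $\alpha$: the constraint $a_{k-1}\in\{0,1\}$ together with the bounded-step condition forces $\gamma=1\cdots a_i$ to occur as a distinguished subsequence whose placement is uniquely determined by the ``as far right as possible'' rule, and induction on $\sum_j a_j$ (or on the Castling depth of $\alpha$ inside $\mathcal{T}_k$) gives the existence.

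Next I would translate the decomposition into tree language via Remark~\ref{re}. The $\gamma$-pieces $\gamma_1,\ldots,\gamma_\ell$ correspond to a distinguished descending chain of $\leftarrow$BFS-labels $1,2,\ldots,a_i$ realized as a (possibly fragmented) spine in $T_\alpha$, while each non-$\gamma$ atom encodes a subtree attached at a specific spine vertex in a canonical order. The proof of this correspondence would proceed by induction along the Castling recursion of Theorem~\ref{thm2}: each Castling step either extends an atom, adds a fresh one, or reattaches a subtree; in every case the modification of $F(\alpha)$ preserves the boundaries of atoms and the position of the spine.

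The decisive step is to show that the involution $\Theta$ of Section~\ref{rootk} --- which relabels $T_\alpha$ by taking $v_k$ in place of $v_0$ as root, numbering siblings from left to right while increasing the subindex at each level --- acts on the block sequence by sending $B_0 B_1\cdots B_\ell$ to $B_\ell B_{\ell-1}\cdots B_0$, while fixing $\gamma_1,\ldots,\gamma_\ell$ in their positions. Geometrically, rerooting at $v_k$ reverses the traversal direction along the spine and, because the new sibling ordering runs opposite to the old $\leftarrow$BFS ordering, turns the ``first from the left'' subtree at each spine vertex into the ``last from the right''; on the $\alpha$ side this is exactly the right-to-left re-insertion of atoms into $\gamma$ claimed in the statement. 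I would then check that the reconstructed string $B_\ell\,\gamma_1\,B_{\ell-1}\,\gamma_2\cdots\gamma_\ell\,B_0$ satisfies the germ inequalities and that the same ``as far right as possible'' rule applies to its own canonical decomposition, closing the fact that $\Theta$ is an involution. I expect the hard part to be this third step: the spine $\gamma$ is not, in general, a single geometric path in $T_\alpha$ (witness the star $\alpha=12\cdots k$), so the precise bookkeeping of how subtrees are reattached at spine vertices under the rerooting --- and the verification that the canonical ``far-right'' choice is preserved, even in degenerate configurations where the naive atom reversal of $\alpha$ fails to be a germ --- is where the bulk of the technical work will concentrate.
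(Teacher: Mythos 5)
Your overall strategy (canonical atom decomposition, transfer to trees via $F$, then analysis of the rerooting $\Theta$) is reasonable, and the paper itself offers almost nothing here: its ``proof'' merely restates the claim and checks the single example $\alpha'$. But the decisive third step of your plan rests on a formula that is wrong. You assert that $\Theta$ sends $B_0\gamma_1B_1\gamma_2\cdots\gamma_\ell B_\ell$ to $B_\ell\gamma_1B_{\ell-1}\gamma_2\cdots\gamma_\ell B_0$, ``fixing $\gamma_1,\ldots,\gamma_\ell$ in their positions.'' The theorem says otherwise: the last sentence of the statement requires the base string $\gamma$ to be re-partitioned in the image, with the ``as far to the right as possible'' rule applied afresh to $F^{-1}\Theta F(\alpha)$, and in general the partition changes. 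In the paper's example, $\gamma'=1234$ is split as $123\,|\,4$ inside $\alpha'=(0)123(2)(234)4(23)(1)(0)(12)(1)$ but as $12\,|\,34$ inside $F^{-1}\Theta F(\alpha')=(1)(12)(0)(1)12(234)34(23)(2)(0)$. Your reconstruction $B_\ell\,\gamma_1\,B_{\ell-1}\cdots\gamma_\ell\,B_0$ applied to this example begins $(23)(1)(0)(12)(1)123\cdots$, i.e.\ with the entry $2$, which violates condition (1) in the definition of a $k$-germ; it is not even a germ, let alone the correct image. Already for $\alpha=122$ (fixed by $\Theta$, with decomposition $12(2)$), a reversal that does not re-optimize the placement of $\gamma$ produces $(2)12=212$, again not a germ.

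What is actually needed --- and what neither you nor the paper supplies --- is a proof that (i) reversing the entire left-to-right sequence of parenthesized atoms and (ii) re-embedding $\gamma$ greedily from the right always yields a valid germ whose own canonical decomposition recovers exactly those atoms, together with a verification that this string really is $F^{-1}\Theta F(\alpha)$, i.e.\ that it encodes $T_\alpha$ rerooted at $v_k$ with the reversed sibling order. Your tree-level intuition (rerooting reverses the traversal along the spine from $v_0$ to $v_k$ and flips ``leftmost child'' to ``rightmost child'') is the right starting point, but until the interaction between that reversal and the re-partitioning of $\gamma$ is pinned down --- including the degenerate configurations you yourself flag --- the argument does not close. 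You correctly identified this as the hard part; it is, and it is precisely where your proposed block-reversal formula breaks.
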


\begin{proof}
$F^{-1}\Theta F(\alpha)$ is obtained by reversing the position of the parenthesized atoms, inserting them between the substrings of a partition of $\gamma$ as the one above but for this reversing situation. In the above example of $\alpha'$, it is
$F^{-1}\Theta F(\alpha')=(1)(12)(0)(1)12(234)34(23)(2)(0)$. 
\end{proof}

\section{Appendix I: Germ Structure of 1-Factorizations}\label{s10}

We present each $c\in V(R_k)$ via the pair $\delta(v)=\{v,\aleph_\pi(v)\}\in R_k$ ($v\in L_k/\pi$)
of Section~\ref{s9} and via the $k$-germ $\alpha$ for which
$\delta(v)=\langle F(\alpha)\rangle$, and view $R_k$
as the graph whose vertices are the $k$-germs $\alpha$, with adjacency
inherited from that of their $\delta$-notation via $F^{-1}$ (i.e. Uncastling).
So, $V(R_k)$ is presented as in the natural ($k$-germ) enumeration (see Section~\ref{srgg}). 

\begin{center}TABLE IV
$$\begin{array}{||l||l||l||l|l|l|l||l|l|l|l||}
m&\alpha& F(\alpha) & F^3(\alpha) & F^2(\alpha) & F^1(\alpha) & F^0(\alpha) & \alpha^3 & \alpha^2 & \alpha^1 & \alpha^0\\\hline\hline
0&0 & 012** &-& 012** & 02*1* & 1\,2*\!*0   &-& 0 & 1 & 0\\
1&1 & 02*1* &-& 1*02* & 012** & 2\!*\!1\!*0 &-& 1 & 0 & 1\\\hline\hline
0&00\!&\!0123\!*\!**\!&\!0123*\!**\!&\!013\!*\!2\!*\!*\!&\!023\!**1*\!&\!123\!*\!*\!*0\!&\!00\!&\!10\!&\!01\!&\!00\\
1&01\!&\!023\!*\!*1*\!&\!1\!*\!023\!*\!*\!&\!1\!*\!03\!*\!2*\!&\!0123*\!**\!&\!2\!*\!13\!*\!*\,0\!&\!01\!&\!12\!&\!00\!&\!11\\
2&10\!&\!013\!*\!2\!*\!*\!&\!02\!*\!20\!*\!*\!&\!0123\!*\!**\!&\!03\!*\!2\!*\!1*\!&\!13\!*\!2\!*\!*\,0\!&\!11\!&\!00\!&\!12\!&\!10\\
3&11\!&\!02\!*\!13\!*\!*\!&\!013\!*\!2\!*\!*\!&\!13\!*\!*02*\!&\!02\!*\!13\!*\!*\!&\!10\!*\!*2\!*3\!&\!10\!&\!11\!&\!11\!&\!01\\
4&12\!&\!03\!*\!2\!*\!1*\!&\!2\!*\!1\!*\!03*\!&\!1\!*\!023\!*\!*\!&\!013\!*\!2\!*\!*\!&\!3\!*\!2\!*\!1\!*0\!&\!12\!&\!01\!&\!10\!&\!12\\\hline\hline
\end{array}$$\end{center}

To start with, examples of such presentation are shown in Table IV
for $k=2$ and $3$, where $m$, $\alpha=\alpha(m)$ and $F(\alpha)$ are shown in the first 3 columns, for $0\le m<C_k$. The neighbors of
$F(\alpha)$ are presented in the central columns of the table as $F^k(\alpha)$, $F^{k-1}(\alpha)$, $\ldots$, $F^0(\alpha)$
respectively for the edge colors $0,1,\ldots,k$, with notation given via the effect of
function $\aleph$. The last columns yield the $k$-germs $\alpha^k$, $\alpha^{k-1}$, $\ldots$, $\alpha^0$ associated via $F^{-1}$ respectively to the listed neighbors $F^k(\alpha)$, $F^{k-1}(\alpha)$ , $\ldots$, $F^0(\alpha)$ of $F(\alpha)$ in $R_k$.

\begin{center}TABLE V
$$\begin{array}{||c||c||c|c|c|c|c||c||c||c||c|c|c|c|c||}
^m_-&^\alpha_{--}&^{\alpha^4}_{--}&^{\alpha^3}_{--}&^{\alpha^2}_{--}&^{\alpha^1}_{--}&^{\alpha^0}_{--}&&^m_-&^\alpha_{--}&^{\alpha^4}_{--}&^{\alpha^3}_{--}&^{\alpha^2}_{--}&^{\alpha^1}_{--}&^{\alpha^0}_{--}\\
^0_1&^{000}_{001}&^{000}_{001}&^{100}_{101}&^{010}_{012}&^{001}_{000}&^{000}_{011}&&^{\;\; 7}_{\;\; 8}&^{110}_{111}&^{100}_{111}&^{111}_{110}&^{110}_{122}&^{012}_{011}&^{010}_{111}\\
^2_3&^{010}_{011}&^{011}_{010}&^{121}_{120}&^{000}_{011}&^{112}_{111}&^{110}_{001}&&^{\;\; 9}_{10}&^{112}_{120}&^{101}_{122}&^{122}_{011}&^{112}_{100}&^{010}_{123}&^{112}_{120}\\
^4_5&^{012}_{100}&^{012}_{110}&^{123}_{000}&^{001}_{120}&^{110}_{101}&^{122}_{100}&&^{11}_{12}&^{121}_{122}&^{121}_{120}&^{010}_{112}&^{121}_{111}&^{122}_{121}&^{101}_{012}\\
^6&^{101}&^{112}&^{001}&^{123}&^{100}&^{121}&&^{13}&^{123}&^{123}&^{012}&^{101}&^{120}&^{123}\\
^{-}&^{--}&^{--}_{3**}&^{--}_{***}&^{--}_{3**}&^{--}_{*2*}&^{--}_{**1}
&&
^{-}&^{--}&^{--}_{3**}&^{--}_{***}&^{--}_{3**}&^{--}_{*2*}&^{--}_{**1}
\end{array}$$\end{center}

For $k=4$ and 5, Tables V-VI, below, have a similar respective natural enumeration adjacency disposition.
We can generalize these tables directly to  {\it Colored Adjacency Tables} denoted {\it CAT}$(k)$, for $k>1$. This way, Theorem~\ref{nt}(A) below is obtained as
indicated in the aggregated last row upending Tables V-VI citing the only non-asterisk entry, for each of $i=k,k-2,\ldots,0$,  as a number $j=(k-1),\ldots,1$ that leads to entry equality in both columns $\alpha=a_{k-1}\cdots a_j\cdots a_1$ and $\alpha^i=a_{k-1}^i\cdots a_j^i\cdots a_1^i$, that is $a_j=a_j^i$. Other important properties are contained in the remaining items of Theorem~\ref{nt}, including (B), that the columns $\alpha^0$ in all CAT$(k)$, ($k>1$), yield an (infinte) integer sequence. 

\begin{theorem}\label{nt} Let: $k>1$, $j(\alpha^k)=k-1$ and $j(\alpha^{i-1})=i$, ($i=k-1,\ldots,1$). Then: 
{\bf(A)} each column $\alpha^{i-1}$ in {\rm CAT}$(k)$, for $i\in[k]\cup\{k+1\}$, preserves the respective $j(\alpha^{i-1})$-th entry of $\alpha$;
{\bf(B)} the columns $\alpha^k$ of all {\rm CAT}$(k)$'s for $k>1$ coincide into an RGS sequence and thus into an integer sequence ${\mathcal S}_0$, the first $C_k$ terms of which form an idempotent permutation for each $k$;
{\bf(C)} the integer sequence ${\mathcal S}_1$ given by concatenating the $m$-indexed intervals $[0,2),[2,5), \ldots,$ $[C_{k-1},C_k)$, etc. in column $\alpha^{k-1}$ of the corresponding tables {\rm CAT}$(2)$, {\rm CAT}$(3),\ldots$, {\rm CAT}$(k)$, etc. allows to encode all columns $\alpha^{k-1}$'s;
{\bf(D)} for each $k>1$, there is an idempotent permutation given in the $m$-indexed interval $[0,C_k)$ of the column $\alpha^{k-1}$ of {\rm CAT}$(k)$; such permutation equals the one given in the interval $[0,C_k)$ of the column $\alpha^{k-2}$ of {\rm CAT}$(k+1)$.
\end{theorem}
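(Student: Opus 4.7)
The plan is to establish (A)--(D) by tracking how the color-$i$ neighbor map $\alpha \mapsto \alpha^i = F^{-1}(F^i(\alpha))$ factors through the bijection $F$, where $F^i(\alpha)$ is the involutive modification of $F(\alpha)$ dictated by the Lexical Procedure (Theorem~\ref{kitr}), re-read via Uncastling (Theorem~\ref{un}). I would first settle (A), and then derive (B)--(D) by stability and coherence arguments across successive values of $k$.

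For (A), I would argue by direct case analysis on the structure of $F(\alpha)$ at the position flipped by color $i$. Step (5) of Theorem~\ref{kitr} identifies color $i$ with the count of horizontal arcs above the diagonal $\partial$ in $\Gamma$; translating this grid count into a prescribed swap on a bounded window of $F(\alpha)$, and then re-reading via the Uncastling steps 1--4 of Theorem~\ref{un}, alters only those Castling increments whose depth lies strictly below a cutoff determined by $i$. Consequently the entry $a_{j(\alpha^{i-1})}$ above this cutoff is invariant. Matching the grid position of the flip with the Castling depth in Theorem~\ref{thm2} yields the prescribed values $j(\alpha^k) = k-1$ and $j(\alpha^{i-1}) = i$.

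For (B) and (C) the key ingredient is \emph{stability under prefix-$0$ extension}: the color-$k$ (resp.\ color-$(k-1)$) operation in CAT$(k)$, applied to $k$-germs whose natural-enumeration index lies in $[0, C_k)$, coincides with the corresponding operation after re-encoding $\alpha$ as a longer germ, since Castling and the Lexical Procedure both extend the \emph{trailing} part of $F(\alpha)$ upon prefixing zeros. This lets $\mathcal{S}_0$ and $\mathcal{S}_1$ be assembled coherently across $k$. That the restricted map on $[0, C_k)$ is an \emph{idempotent permutation} (i.e.\ an involution, equivalently a permutation with only fixed points and $2$-cycles) follows from color $k$ lying in the 1-factorization of Theorem~\ref{thm4}, so that $\alpha \mapsto \alpha^k$ is self-inverse; the partitioned concatenation in $\mathcal{S}_1$ reflects the slight extra dependence of color $k-1$ on the length of the germ.

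For (D), I would show that prefixing a $0$ to $\alpha$ extends $F(\alpha)$ by exactly one new highest color and one extra trailing asterisk, so that the Lexical labels on the edges at $\langle F(0\alpha)\rangle$ shift by one relative to those at $\langle F(\alpha)\rangle$; in particular color $k-1$ in $R_k$ matches color $k-2$ in $R_{k+1}$ on the common domain $[0, C_k)$, forcing the two involutions to coincide. The main obstacle throughout is (A): bridging the \emph{global} arc-counting definition of the Lexical color with the \emph{local} substring Castling decomposition $W^i|X|Y|Z^i$, so as to verify that the preserved entries $a_{j(\alpha^{i-1})}$ really are invariant under every color-$i$ flip. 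Once (A) is in hand, (B)--(D) follow by bookkeeping; the novelty of (A) is the explicit match between the grid-diagonal picture and the Castling depth.
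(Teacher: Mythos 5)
The paper's own proof of this theorem is only a sketch (part (A) is justified by inspection of Tables V--VI, (C) by a single example, (D) by ``can be proved similarly''), so you should know you are being compared against an incomplete argument. One element of your plan is genuinely better than the paper's: deducing the ``idempotent permutation'' (i.e.\ involution) claims directly from the 1-factorization of Theorem~\ref{thm4} --- each vertex of $R_k$ meets exactly one edge of each color, so every neighbor map $\alpha\mapsto\alpha^i$ is self-inverse --- is clean, applies to all colors at once, and is nowhere made explicit in the paper. The paper's one-line route to the ``single sequence ${\mathcal S}_0$'' part of (B) is different from yours and worth noting: $\alpha$ and $\alpha^k$ share all entries to the left of the leftmost $1$, so that column depends only on the RGS obtained by stripping leading zeros and is therefore independent of $k$.

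The gaps. First, (A) is the crux, and your treatment of it is a description of a proof rather than a proof: the assertion that a color-$i$ flip ``alters only those Castling increments whose depth lies strictly below a cutoff determined by $i$'' is exactly what has to be established, and you give no mechanism for translating the arc count of Theorem~\ref{kitr} into the decomposition $W^i|X|Y|Z^i$ of Theorem~\ref{thm2}. Second, the stability mechanisms you invoke for (B) and (D) are wrong as stated. Prefixing a $0$ to a $k$-germ does not ``extend the trailing part of $F(\alpha)$'': comparing $F(12)=03*2*1*$ with $F(012)=034**2*1*$ shows that $F(0\alpha)$ arises from $F(\alpha)$ by the \emph{internal} substitution of $k(k+1)*$ for the maximal color $k$. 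Moreover, a uniform ``shift by one'' of lexical labels is inconsistent with the theorem itself: (B) requires the top-color map to move with the top color (column $\alpha^{k}$ of {\rm CAT}$(k)$ matching column $\alpha^{k+1}$ of {\rm CAT}$(k+1)$), whereas (D) requires the color-$(k-1)$ map to stay put in absolute value --- the involution $(0\,2)(1\,4)$ in column $\alpha^{2}$ of {\rm CAT}$(3)$ reappears in column $\alpha^{2}$, not $\alpha^{3}$, of {\rm CAT}$(4)$ --- so your predicted matching for (D) is contradicted by the tables. The precise lemma describing how the $k+2$ lexical colors at $\langle F(0\alpha)\rangle$ relate to the $k+1$ colors at $\langle F(\alpha)\rangle$ is the missing ingredient, and neither your proposal nor the paper supplies it.
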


\begin{proof} (A) holds as a continuation of the observation made above with respect to the last aggregated row in Figures V-VI. Let $\alpha$ be a $k$-germ. Then $\alpha$ shares with $\alpha^k$ 
(e.g. the leftmost column $\alpha^i$ in Tables IV-VI, for $0\le i\le k$) all the entries to the left of the leftmost entry 1, which yields (B). Note that if $k=3$ then $m=2,3,4$ yield for $\alpha^{k-1}$ the idempotent permutation $(2,0)(4,1)$, illustrating (C). (D) can be proved similarly.\end{proof}

\begin{center} TABLE VI
$$\begin{array}{||c||c||c|c|c|c|c|c||c||c||c||c|c|c|c|c|c||c||}
^m_-&^\alpha_{--}&^{\alpha^5}_{--}&^{\alpha^4}_{--}&^{\alpha^3}_{--}&^{\alpha^2}_{--}&^{\alpha^1}_{--}&^{\alpha^0}_{--}&&^m_-&^\alpha_{--}&^{\alpha^5}_{--}&^{\alpha^4}_{--}&^{\alpha^3}_{--}&^{\alpha^2}_{--}&^{\alpha^1}_{--}&^{\alpha^0}_{--}\\
^{0}_{1}&^{0000}_{0001}&^{0000}_{0001}&^{1000}_{1001}&^{0100}_{0101}&^{0010}_{0012}&^{0001}_{0000}&^{0000}_{0011}&&^{21}_{22}&^{1110}_{1111}&^{1111}_{1110}&^{1100}_{1111}&^{1221}_{1220}&^{0110}_{0122}&^{1112}_{1111}&^{1110}_{0111}\\
^{2}_{3}&^{0010}_{0011}&^{0011}_{0010}&^{1011}_{1010}&^{0121}_{0120}&^{0000}_{0011}&^{0112}_{0111}&^{0110}_{0001}&&^{23}_{24}&^{1112}_{1120}&^{1122}_{1011}&^{1101}_{1222}&^{1233}_{1121}&^{0112}_{0100}&^{1110}_{1123}&^{1222}_{1120}\\
^{4}_{5}&^{0012}_{0100}&^{0012}_{0110}&^{1012}_{1210}&^{0123}_{0000}&^{0001}_{1120}&^{0110}_{1101}&^{0122}_{1100}&&^{25}_{26}&^{1121}_{1122}&^{1010}_{1112}&^{1221}_{1220}&^{1120}_{1223}&^{0121}_{0111}&^{1122}_{1121}&^{0101}_{1122}\\
^{6}_{7}&^{0101}_{0110}&^{0112}_{0100}&^{1212}_{1200}&^{0001}_{0111}&^{1123}_{1110}&^{1100}_{0012}&^{1121}_{0010}&&^{27}_{28}&^{1123}_{1200}&^{1012}_{1220}&^{1233}_{0110}&^{1123}_{1000}&^{0101}_{1230}&^{1120}_{1201}&^{1223}_{1200}\\
^{8}_{9}&^{0111}_{0112}&^{0111}_{0101}&^{1211}_{1201}&^{0110}_{0122}&^{1122}_{1112}&^{0011}_{0010}&^{1111}_{0112}&&^{29}_{30}&^{1201}_{1210}&^{1223}_{1210}&^{0112}_{0100}&^{1001}_{1211}&^{1234}_{1220}&^{1200}_{1012}&^{1231}_{1010}\\
^{10}_{11}&^{0120}_{0121}&^{0122}_{0121}&^{1232}_{1231}&^{0011}_{0010}&^{1100}_{1121}&^{1223}_{1222}&^{1220}_{1101}&&^{31}_{32}&^{1211}_{1212}&^{1222}_{1212}&^{0111}_{0101}&^{1210}_{1232}&^{1233}_{1223}&^{1011}_{1010}&^{1221}_{1212}\\
^{12}_{13}&^{0122}_{0123}&^{0120}_{0123}&^{1230}_{1234}&^{0112}_{0012}&^{1111}_{1101}&^{1221}_{1220}&^{0012}_{1233}&&^{33}_{34}&^{1220}_{1221}&^{1200}_{1221}&^{1122}_{1121}&^{1111}_{1110}&^{1210}_{1232}&^{0123}_{0122}&^{0120}_{1211}\\
^{14}_{15}&^{1000}_{1001}&^{1100}_{1101}&^{0000}_{0001}&^{1200}_{1201}&^{1010}_{1012}&^{1001}_{1000}&^{1000}_{1011}&&^{35}_{36}&^{1222}_{1223}&^{1211}_{1201}&^{1120}_{1223}&^{1222}_{1122}&^{1222}_{1212}&^{0121}_{0120}&^{1112}_{1123}\\
^{16}_{17}&^{1010}_{1011}&^{1121}_{1120}&^{0011}_{0010}&^{1231}_{1230}&^{1000}_{1011}&^{1212}_{1211}&^{1210}_{1001}&&^{37}_{38}&^{1230}_{1231}&^{1233}_{1232}&^{0122}_{0121}&^{1011}_{1010}&^{1200}_{1231}&^{1234}_{1233}&^{1230}_{1201}\\
^{18}_{19}&^{1012}_{1100}&^{1123}_{1000}&^{0012}_{1110}&^{1234}_{1100}&^{1001}_{0120}&^{1210}_{0101}&^{1232}_{0100}&&^{39}_{40}&^{1232}_{1233}&^{1231}_{1230}&^{0120}_{1123}&^{1212}_{1112}&^{1221}_{1211}&^{1232}_{1231}&^{1012}_{0123}\\
^{20}&^{1101}&^{1001}&^{1112}&^{1101}&^{0123}&^{0100}&^{0121}&&^{41}&^{1234}&^{1234}&^{0123}&^{1012}&^{1201}&^{1230}&^{1234}\\
^{-}&^{--}&^{--}_{4***}&^{--}_{****}&^{--}_{4***}&^{--}_{*3**}&^{--}_{**2*}&^{--}_{***1}
&&
^{-}&^{--}&^{--}_{4***}&^{--}_{****}&^{--}_{4***}&^{--}_{*3**}&^{--}_{**2*}&^{--}_{***1}
\end{array}$$\end{center}

The sequences in Theorem~\ref{nt} (B)-(C) start as follows, with intervals ended in ``;'':

$$\begin{array}{rrrrrrrrrrrrrrrrrr}
^{\{0\}\cup\mathbb{Z}^+=}&^{0,}&^{1;}&^{2,}&^{3,}&^{4;}&^{5,}&^{6,}&^{7,}&^{8,}&^{9,}&^{10,}&^{11,}&^{12,}&^{13;}&^{14}&^{15,}&^{16,\ldots}\\\hline
^{(B)=}_{(C)=}&^{0,}_{1,}&^{1;}_{0;}&^{3,}_{0,}&^{2,}_{3,}&^{4;}_{1;}&^{7,}_{0,}&^{9,}_{1,}&^{5,}_{8,}&^{8,}_{7,}&^{\hspace{1mm}6,}_{12,}&^{12,}_{\hspace{1mm}3,}&^{11,}_{\hspace{1mm}2,}&^{10,}_{\hspace{1mm}9,}&^{13;}_{\hspace{1mm}4;}&^{19,}_{\hspace{1mm}0,}&^{20,}_{\hspace{1mm}1,}&^{25,\ldots}_{\hspace{1mm}3,\ldots}\\
\end{array}$$

\begin{remark}\label{pro1} With the notation of Section~\ref{rootk} and Theorem~\ref{pro}, for each of the involutions $\alpha^i$ ($0<i<k$), it holds that $\alpha^i\Theta=\Theta\alpha^{k-i}$. This implies that 

{\bf(A)} every $0$-colored edge represents an adjacency via $\Phi'=F\alpha_kF^{-1}$ and 

{\bf(B)} every $1$-colored edge represents an adjacency via $\Psi'=F\Psi F^{-1}$. 

\noindent In addition,
the reflection symmetry of $\Phi'$ yields the sequence $S_0$ cited in Theorem~\ref{nt}(B). A similar observation yields from $\Psi'$ the sequence $S_1$ cited in Theorem~\ref{nt}(C).
\end{remark}

Given a $k$-germ $\alpha=a_{k-1}\cdots a_1$, we want to express $\alpha^k,\alpha^{k-1},\ldots,\alpha^0$ as functions of $\alpha$.
Given a substring $\alpha'=a_{k-j}\cdots a_{k-i}$ of $\alpha$ ($0<j\le i<k$), let: 
{\bf(a)} the {\it reverse string} off $\alpha'$ be $\psi(\alpha')=a_{k-i}\cdots a_{k-j}$; 
{\bf(b)} the {\it ascent} of $\alpha'$ be 
{\bf(i)} its maximal initial ascending substring, if $a_{k-j}=0$, and 
{\bf(ii)} its maximal initial non-descending substring with at most 2 equal nonzero
terms, if $a_{k-j}>0$.
Then, the following remarks allow to express the $k$-germs $\alpha^p=\beta=b_{k-1}\cdots b_1$ via the colors $p=0,1,\ldots,k$, independently of $F^{-1}$ and $F$.

\begin{remark}\label{p10} Assume $p=k$. If $a_{k-1}=1$, take $0|\alpha$ instead of $\alpha=a_{k-1}\cdots a_1$, with $k-1$ instead of $k$, removing afterwards from the resulting $\beta$ the added leftmost 0. Now, let $\alpha_1=a_{k-1}\cdots a_{k-i_1}$ be the ascent of $\alpha$. Let $B_1=i_1-1$, where $i_1=||\alpha_1||$ is the length of $\alpha_1$.
It can be seen that $\beta$ has ascent $\beta_1=b_{k-1}\cdots b_{k-i_1}$
with $\alpha_1+\psi(\beta_1)=B_1\cdots B_1$. If $\alpha\ne\alpha_1$, let
$\alpha_2$ be the ascent of $\alpha\setminus\alpha_1$. Then
there is a $||\alpha_2||$-germ $\beta_2$ with $\alpha_2+\psi(\beta_2)=B_2\cdots B_2$ and  $B_2=||\alpha_1||+||\alpha_2||-2$. Inductively when feasible for $j>2$, let $\alpha_j$ be the ascent of $\alpha\setminus(\alpha_1|\alpha_2|\cdots|\alpha_{j-1})$. Then there is a $||\alpha_j||$-germ $\beta_j$ with $\alpha_j+\psi(\beta_j)=B_j\cdots B_j$ and $B_j=||\alpha_{j-1}||+||\alpha_j||-2$. This way, $\beta=\beta_1|\beta_2|\cdots|\beta_j|\cdots$.
\end{remark}

\begin{remark}\label{p11} Assume $k>p>0$. By Theorem~\ref{nt} (A), if $p<k-1$, then $b_{p+1}=a_{p+1}$; in this case, let $\alpha'=\alpha\setminus\{a_{k-1}\cdots a_q\}$ with $q=p+1$. If $p=k-1$, let $q=k$ and let $\alpha'=\alpha$.
In both cases (either $p<k-1$ or $p=k-1$) let $\alpha'_1=a_{q-1}\cdots a_{k-i_1}$ be the ascent of $\alpha'$. It can be seen that $\beta'=\beta\setminus\{b_{k-1}\cdots b_q\}$ has ascent $\beta'_1=b_{k-1}\cdots b_{k-i_1}$ where $\alpha'_1+\psi(\beta'_1)=B'_1\cdots B'_1$ with $B'_1=i_1+a_q$.
If $\alpha'\ne\alpha'_1$ then let $\alpha'_2$ be the ascent of $\alpha'\setminus\alpha'_1$. Then
there is a $||\alpha'_2||$-germ $\beta'_2$ where $\alpha'_2+\psi(\beta'_2)=B'_2\cdots B'_2$ with  $B'_2=||\alpha'_1||+||\alpha'_2||-2$. Inductively when feasible for $j>2$, let $\alpha_j$ be the ascent of $\alpha'\setminus(\alpha'_1|\alpha'_2|\cdots|\alpha'_{j-1})$. Then there is a $||\alpha'_j||$-germ $\beta'_j$ where $\alpha'_j+\psi(\beta'_j)=B'_j\cdots B'_j$ with $B'_j=||\alpha'_{j-1}||+||\alpha'_j||-2$. This way, $\beta'=\beta'_1|\beta'_2|\cdots|\beta'_j|\cdots$.

We process the left-hand side from position $q$. If $p>1$, we set $a_{a_q+2}\cdots a_q +\psi(b_{b_q+2}\cdots b_q)$ to equal a constant string $B\cdots B$, where $a_{a_q+2}\cdots a_q$ is an ascent and $a_{a_q+2}=b_{b_q+2}$.
Expressing all those numbers $a_i,b_i$ as $a_i^0,b_i^0$, respectively, in order to keep an inductive approach, let $a^1_q=a_{a_q+2}$. While feasible, let $a^1_{q+1}=a_{a_q+1}$, $a^1_{q+2}=a_{a_q}$ and so on. In this case, let $b^1_q=b_{b_q+2}$, $b^1_{q+1}=b_{b_q+1}$, $b^1_{q+2}=b_{b_q}$ and so on. Now, $a^1_{a^1_q+2}\cdots a^1_q+\psi(b^1_{b^1_q+2}\cdots b^1_q)$ equals a constant string, where $a^1_{a^1_q+2}\cdots a^1_q$ is an ascent and $a^1_{a^1_q+2}=b^1_{b^1_q+2}$.  The continuation of this procedure produces a subsequent string $a^2_q$ and so on, until what remains to reach the leftmost entry of $\alpha$ is smaller than the needed space for the procedure itself to continue, in which case, a remaining initial ascent is shared by both $\alpha$ and $\beta$. This allows to form the left-hand side of $\alpha^p=\beta$ by concatenation.
\end{remark}

\end{document}